\newcommand{\leo}[1]{{#1}}
\newcommand{\leonew}[1]{#1}
\newcommand{\yuki}[1]{#1}
\newcommand{\yukinew}[1]{#1}
\newcommand{\yukinewnew}[1]{{#1}}
\newcommand{\markj}[1]{#1}
\newtheorem{theorem}{Theorem}
\newtheorem{lemma}{Lemma}
\newtheorem{corollary}{Corollary}
\newtheorem{observation}{Observation}
\theoremstyle{definition}
\newtheorem{definition}{Definition}
\setlist[description]{leftmargin=\parindent,labelindent=\parindent}
\newcommand{\U}[1][]{
\ifthenelse{\equal{#1}{}}{\mathcal{U}}{\mathcal{U}_{#1}}%
}
\newcommand{\N}[1][]{
\ifthenelse{\equal{#1}{}}{\mathcal{N}}{\mathcal{N}_{#1}}%
}
\newcommand{\NP}[1][]{
\ifthenelse{\equal{#1}{}}{\mathcal{N}^{()}}{\mathcal{N}^{()}_{#1}}%
}
\newcommand{\Ui}[1][]{
\ifthenelse{\equal{#1}{}}{\dot{\mathcal{U}}}{\dot{\mathcal{U}}_{#1}}%
}
\newcommand{\Ni}[1][]{
\ifthenelse{\equal{#1}{}}{\mathcal{N}^l}{\mathcal{N}^l_{#1}}%
}
\newcommand{\NiP}[1][]{
\ifthenelse{\equal{#1}{}}{\dot{\mathcal{N}}^{()}}{\dot{\mathcal{N}}^{()}_{#1}}%
}
\newcommand{\RR}{\mathbb{R}}
\newcommand{\move}[1]{\mathrel{\raisebox{-2pt}{$\xrightarrow{#1}$}}}
\newcommand{\isom}[1][]{
\ifthenelse{\equal{#1}{}}{\simeq}{\mathrel{\raisebox{-2pt}{$\overset{#1}{\simeq}$}}}%
}
\DeclarePairedDelimiter{\ceil}{\lceil}{\rceil}
\DeclareMathOperator{\diam}{diam}
\DeclareMathOperator{\Orch}{Orch}
\newcommand{\rNNI}{\mathrm{rNNI}}
\title{Orchard Networks are Trees with Additional Horizontal Arcs}
\author{Leo van Iersel \footnote{Delft Institute of Applied Mathematics, Delft University of Technology, Mekelweg 4, 2628 CD, Delft, The Netherlands, \{L.J.J.vanIersel, R.Janssen-2, M.E.L.Jones, Y.Murakami\}@tudelft.nl. Research funded in part by the Netherlands Organization for Scientific Research (NWO), including Vidi grant 639.072.602, and partly by the 4TU Applied Mathematics Institute. Mark Jones was also supported by the KLEIN grant OCENW.KLEIN.125.} \and Remie Janssen\footnotemark[1] \and Mark Jones\footnotemark[1] \and Yukihiro Murakami\footnotemark[1]
      }
\date{\today}
\begin{document}

\maketitle
\begin{abstract}
Phylogenetic networks are used in biology to represent evolutionary histories. The class of orchard phylogenetic networks was recently introduced for their computational benefits, without any biological justification. Here, we show that orchard networks can be interpreted as trees with additional \emph{horizontal} arcs. \leo{Therefore, they are closely related to tree-based networks, where the difference is that in tree-based networks the additional arcs do not need to be horizontal.} Then, we use this new characterization to show that the space of orchard networks is connected under the rNNI rearrangement move, with a diameter of at most $4kn+n\ceil{\log_2(n)}+2k+6n-8$.
\end{abstract}




\maketitle

\section{Introduction}
Phylogenetic trees and networks are used in biology to represent evolutionary histories. Trees primarily show the vertical evolutionary processes, whereas networks are used to augment this view with the inclusion of horizontal, or reticulate, processes such as horizontal gene transfer, hybridization, and recombination \cite{bapteste2013networks,elworth2019advances,blais2021past}.

Sometimes, only a subset of all phylogenetic networks is considered, either to ease the computation of a good representation of the actual evolutionary history (e.g., \cite{van2014trinets,bordewich2018constructing,erdHos2019class,van2019practical,borst2020new}), or because the evolutionary history is expected to have a certain structure (e.g., \cite{markin2019robinson}). 
Examples of such subsets are the classes of phylogenetic networks called ``tree-child'' networks, ``tree-based'' networks, and ``temporal'' networks. Tree-based networks are mainly used for their biological interpretation as a tree with additional arcs (e.g., \cite{francis2015phylogenetic,cardona2015reconstruction,fischer2020classes}). \leo{Temporal and tree-child networks are especially interesting because of their computation benefits and nice mathematical properties.}
Tree-child networks can be biologically interpreted as networks in which each reticulate event leaves a trace in the genetic information of the studied species \cite{cardona2008comparison,markin2019robinson}. \leo{Networks with a temporal labelling} can be biologically interpreted as networks where each reticulation represents a hybrid speciation event~\cite{moret2004phylogenetic,baroni2006hybrids}. 
\leo{Networks that are tree-child and have a temporal labelling are sometimes simply called \emph{temporal}~\cite{humphries2013complexity}.} \leonew{It should be noted though that extinctions or undersampling may lead to phylogenetic networks that are not temporal or not tree-based.}

Another class of networks is the class of \emph{orchard networks}, which were recently introduced for their computational benefits~\cite{janssen2021cherry,erdHos2019class} \leonew{while being more general than tree-child networks}. They are usually defined as networks that can be reduced by a series of cherry-picking actions \leo{(for this reason, they were called ``cherry picking networks'' in~\cite{janssen2021cherry})}. This comes up naturally when one builds networks by repeatedly considering pairs of taxa that seem to be closely related in the data \cite{bordewich2018constructing,van2019practical,erdHos2019class,bai2021defining}. Recently, it was shown that they can be characterized more structurally using acyclic cherry covers~\cite{van2021unifying}.

\leonew{In this paper, we show that orchard networks can also be characterized in a very natural way. We prove that orchard networks are precisely those phylogenetic networks that can be obtained from a phylogenetic tree by inserting horizontal arcs, see \Cref{fig:RealHGT} for an example. Hence, they are closely related to LGT networks~\cite{pons2019generation} except that orchard networks do not (necessarily) specify which arcs are LGT-arcs. Our characterization of orchard networks is similar to the \markj{previously mentioned} concept of \emph{tree-based} networks, which are those networks that can be obtained from a phylogenetic tree, called a \emph{base tree}, by inserting arcs, called \emph{linking arcs}. The difference is that for orchard networks the linking arcs need to be horizontal while for tree-based networks they do not.}

\leonew{Our characterization of orchard networks can be seen as a time-consistency property, which we call an ``HGT-consistent labelling''. It basically says that orchard networks are consistent with an evolutionary history in time in which reticulate events represent instantaneous (horizontal) transfers such as LGT events. This is similar, but not the same, as the notion of time-consistency, or temporal labelling, that is commonly found in the phylogenetic networks literature~\cite{moret2004phylogenetic,baroni2006hybrids}. The difference is that in a temporal labelling \emph{both} arcs entering a reticulation need to be horizontal, which is more natural when reticulations represent hybridization events. This notion has been widely popular in defining network classes, which have been explored in relation to metrics~\cite{cardona2008distance}, encodings~\cite{cardona2008distance}, and reconstruction methods~\cite{borst2020new}. Another related notion of time-consistency was introduced in~\cite{erdem2005temporal}, which allows bidirectional horizontal arcs as well.} 

Unlike the other definitions of orchard networks, the \leonew{``HGT-consistent labelling''} that we introduce here can easily be interpreted biologically. This makes the class of orchard networks even more relevant, as it has a natural biological interpretation as a tree with horizontal arcs. \leonew{Nevertheless, as with the classes of temporal and tree-based networks, extinctions or undersampling may lead to networks that are not orchard, because one or more LGT arcs need to be drawn forward in time.}

\begin{figure}
    \centering
    \includegraphics[width=0.5\textwidth]{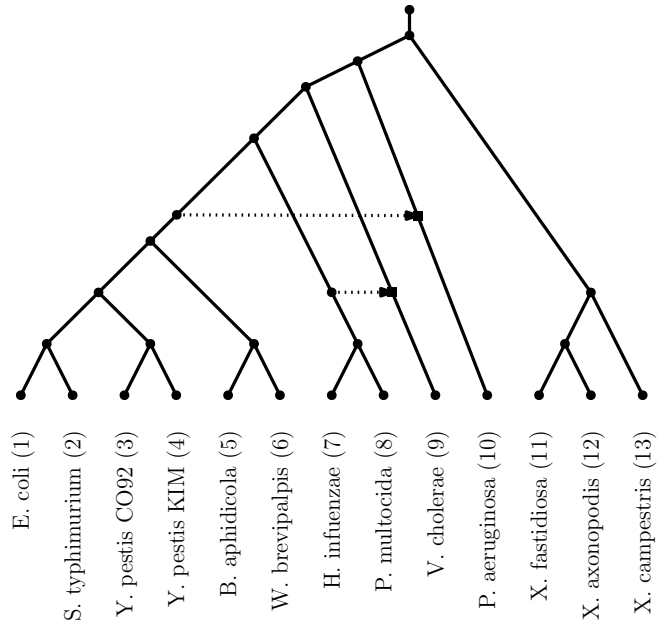}
    \caption{A network on taxa within the $\gamma$-Proteobacteria class with two reticulate events~\cite{nakhleh2005riata}. 
    The dotted arcs with the arrowheads illustrate the passing of genetic material via a horizontal transfer.
    \markj{In this and all subsequent figures, the non-horizontal arcs without arrowheads are directed downwards.}
    The network is a binary orchard network, since it can be reduced by a sequence of cherry reductions (see \Cref{subsec:Orch} for formal definitions of \leonew{such reductions}). \leonew{One possible} sequence that would reduce the network is
    $(1,2)(3,4)(2,4)(5,6)(4,6)(7,8)(9,8)(10,6)(6,8)(8,9)(9,10)(11,12)(12,13)(10,13)$.}
    \label{fig:RealHGT}
\end{figure}

To showcase the mathematical utility of the new characterization, we use it to prove that the space of orchard networks is connected under rNNI moves. 
Connectedness of search spaces under rearrangement moves, such as the rNNI move, is important because of their use in local search heuristics and Bayesian methods in phylogenetics (e.g., \cite{markin2019robinson,bouckaert2019beast,wen2018inferring}). Although it has already been proven that rearrangement moves are sufficient to connect several classes of networks \cite{bordewich2017lost,klawitter2020spaces,erdHos2021rooted}, the connectedness of the space of orchard networks has not been investigated yet. 

In this paper, we fill this gap in the literature, which may have resulted from the late introduction of this relatively new class of networks. Because orchard networks naturally occur as results of statistical models for network generation, such as the level-$k$ LGT networks from \cite{pons2019generation}, this result may prove to be especially important. Indeed, our results then show that it is possible to use such a model as a prior in a Bayesian method in conjunction with rNNI or rSPR moves, while keeping the search space connected.

\section{Preliminaries}
In this section, we define phylogenetic networks and \yukinewnew{recall} the 
\leonew{original definition} of orchard networks. Then we recall the definition of an rNNI move on a phylogenetic network, and some results related to these moves.

\begin{definition}
A \emph{directed phylogenetic network} on a set of \emph{taxa} $X$ is a directed acyclic graph whose nodes are of the following types:
\begin{description}
\item[Root] a node with indegree-0 and outdegree-1.
\item[Tree node] a node with indegree-1 and outdegree at least 2.
\item[Reticulation] a node with indegree at least 2 and outdegree-1.
\item[Leaf] a node with indegree-1 and outdegree-0.
\end{description}
The leaves are bijectively labelled by the taxa in~$X$. Because of this, we will refer to a leaf~$l$ by its label in~$X$ and vice versa. \leonew{Non-leaf nodes will also be called \emph{internal} nodes. Furthermore, we will refer to directed phylogenetic networks as \emph{networks} for short.}
\end{definition}

A network where all tree nodes and reticulations have total degree (sum of indegree and outdegree) exactly 3 is called a \emph{binary} network.
We use the term \emph{non-binary} to mean \emph{not necessarily binary}; in particular, binary networks are also non-binary.
Most networks we will consider in this paper are directed binary phylogenetic networks.
We shall prove results on non-binary networks only in \Cref{sec:NonBinary}.

As a network $N$ is a directed acyclic graph, we have a natural ordering on the nodes of $N$. If $N$ contains an arc $(u,v)$, then we say that $u$ is a \emph{parent} of $v$, that~$v$ is a \emph{child} of $u$, \leonew{that~$u$ is the \emph{tail} of~$(u,v)$ and that~$v$ is the \emph{head} of~$(u,v)$}. If there is a directed path from $u$ to $v$, then we say that $u$ is \leonew{\emph{above}} $v$ and $v$ is \leonew{\emph{below}} $u$.
An arc~$(u,v)$ is a \emph{reticulation arc} if~$v$ is a reticulation; it is a \emph{tree arc} otherwise.

\yukinewnew{The \emph{reticulation number} $r(N)$ of a network $N$ is the total number of reticulation arcs minus the total number of reticulation nodes in~$N$.
In a binary network, the reticulation number is simply the number of reticulation nodes.}

We say that two \leo{networks~$N,N'$ are \emph{isomorphic} if}
there exists a bijection~$f$ between the nodes of~$N$ and the nodes of~$N'$
such that~$(u,v)$ is an arc of~$N$ if and only if~$(f(u),f(v))$ is an arc of~$N'$ and each labelled node of~$N$ is mapped to a node in~$N'$ with the same label.

Let~$N$ be a network and let~$v$ be a node in~$N$.
For an arc~$(u,v)$ in~$N$, \emph{contracting the arc~$(u,v)$} is the action of deleting the arc~$(u,v)$ and identifying~$u$ and~$v$.
We say that a network~$N'$ is a \emph{binary resolution} of~$N$ if~$N'$ is binary and a network isomorphic to~$N$ can be obtained from~$N'$ by contracting arcs.

\subsection{Orchard networks}\label{subsec:Orch}
Orchard networks were first introduced as networks that can be reduced by picking \emph{cherries} and \emph{reticulated cherries} \cite{janssen2021cherry,erdHos2019class}. In this section, we recall this definition of orchard networks.

\begin{definition}
An ordered pair of leaves $(x,y)$ in a network $N$ is a \emph{cherry} if $x$ and $y$ share a common parent. The pair $(x,y)$ is a \emph{reticulated cherry} if the parent $p_x$ of $x$ is a reticulation, and $p_x$ and $y$ share a common parent. If $(x,y)$ is a cherry or a reticulated cherry, we call it a \emph{reducible pair}.
\end{definition}

\yukinewnew{\emph{Suppressing} a node~$v$ with exactly one parent~$u$ and exactly one child~$w$ refers to deleting the node~$v$ and adding an arc~$(u,w)$.}

\begin{definition}\label{def:reduction}
Let $N$ be a non-binary network and let $(x,y)$ be a pair of leaves in~$N$. Let~$p_x,p_y$ denote the parents of~$x,y$, respectively, and let~$g_x$ denote the parent of~$p_x$ that is not~$p_y$, and let~$g_y$ denote a parent of~$p_y$. \emph{Reducing} (or \emph{picking}) the pair $(x,y)$ in $N$ consists of the following:
\begin{itemize}
    \item If $(x,y)$ is a cherry, then delete~$x$, and suppress~$p_x$ if~$p_x$ is consequently a node of indegree-1 and outdegree-1.
    \item If $(x,y)$ is a reticulated cherry, delete the arc~$(p_y,p_x)$, and suppress indegree-1 and outdegree-1 nodes ($p_x$ \leo{and} $p_y$ are the only candidates for this suppression).
    \item If~$(x,y)$ is not a cherry nor a reticulated cherry, then do nothing.
\end{itemize}
\end{definition}

It is easy to see that the graph obtained by reducing a pair from a network is still a network, precisely because we suppress indegree-1 and outdegree-1 nodes.
We denote the network obtained by reducing a pair~$(x,y)$ from a network~$N$ by~$N(x,y)$.
Let~$S$ be a sequence of ordered pairs on distinct elements. The network obtained by repeatedly reducing the pairs of~$S$ in order from~$N$ is denoted~$NS$.

\begin{definition}
A network is \emph{orchard} if there exists a sequence of ordered pairs~$S$ such that~$NS$ is a tree with one leaf. In such a case, we say that~$S$ reduces~$N$.
\end{definition}

We call a sequence of ordered pairs a \emph{cherry-picking sequence} if it reduces some orchard network, such that the length of the sequence is minimal.
It was shown independently in \cite{janssen2021cherry, erdHos2019class} that 
\markj{picking any reducible pair in an orchard network results in another orchard network}
(also for non-binary networks \cite{janssen2021cherry}).
This means that in general, orchard networks may have \leonew{multiple} cherry-picking sequences that reduce them.
See \Cref{fig:RealHGT} for an example of an orchard network.

\subsection{Tree-based networks}

Tree-based networks were introduced as those that can be obtained by adding arcs between arcs of a given \emph{base tree}~\cite{francis2015phylogenetic}. 
We recall the following definition.

\begin{definition}
A binary network~$N$ is \emph{tree-based} with \emph{base tree}~$T$ if~$N$ can be obtained from~$T$ in the following steps:
\begin{enumerate}
    \item Replace some arcs of~$T$ by paths, whose internal nodes are called \emph{attachment points}; each attachment point is of indegree-1 and outdegree-1;
    \item Place arcs between attachment points, called \emph{linking arcs}, so that the graph contains no nodes of total degree greater than~$3$, and so that it remains acyclic; and
    \item Suppress all attachment points not incident to any linking arcs.
\end{enumerate}
\end{definition}

\leonew{Definitions regarding rearrangement moves are given in Section~\ref{sec:moves}.}

\section{\leonew{Characterization} of orchard networks}
In this section, we prove that orchard networks can be characterized as phylogenetic networks that admit a certain type of time-consistent labelling.
We show this first for binary orchard networks, and later extend the characterization to non-binary orchard networks.

\subsection{Binary orchard networks}

\begin{definition}\label{def:HGT}
Let $N$ be a binary phylogenetic network \leonew{with node set~$V$}. An \emph{HGT-consistent labelling} of $N$ is a labelling $t:V\rightarrow \RR$ such that 
\begin{enumerate}
    \item For all arcs $(u,v)$, $t(u)\leq t(v)$ and equality is only allowed if $v$ is a reticulation.
    \item For each internal node $u$, there is a child $v$ of $u$ such that $t(u)<t(v)$.
    \item For each reticulation $r$ with parents $u$ and $v$, exactly one of $t(u)=t(r)$ and $t(v)=t(r)$ holds.
\end{enumerate}
\end{definition}

The properties listed above in Definition~\ref{def:HGT} will be referred to as Properties 1,2, and 3, respectively.
The biological interpretation of a network with an HGT-consistent labelling is that reticulations are caused within the network by horizontal transfers.
Property 3 ensures that reticulation nodes are contemporaneous with one of their parents -- in particular, with the parent from which genetic material is passed via the horizontal arc.
Intuitively, one can view a network with an HGT-consistent labelling as a tree with horizontal arcs added to it.
This means that a base tree of a given network with HGT-consistent labelling can be obtained by deleting all reticulate arcs with endpoints of the same label (horizontal arcs). 
The following observation follows immediately.

\medskip



\begin{observation}\label{obs:HGT-->TB}
Each binary network that admits an HGT-consistent labelling is tree-based.
In particular, a base tree can be obtained by deleting all reticulation arcs where the tail and head have the same time labels.
\end{observation}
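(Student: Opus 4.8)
The plan is to show that the network obtained by deleting the designated horizontal arcs is a valid base tree, and that the original network arises from this tree by the attachment-point-and-linking-arc construction of tree-basedness. First I would make precise what ``horizontal arc'' means here: by Property~3, each reticulation~$r$ has exactly one parent~$u$ with $t(u)=t(r)$ (the tail of the horizontal arc) and, since indegree is exactly~$2$ in a binary network, exactly one other parent~$w$ with $t(w)<t(r)$. I would call the arc $(u,r)$ the \emph{horizontal arc} at~$r$ and the arc $(w,r)$ the \emph{vertical arc} at~$r$. The candidate base tree~$T$ is obtained from~$N$ by deleting every horizontal arc and then suppressing the resulting indegree-$1$ outdegree-$1$ nodes.

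Next I would verify that~$T$ is genuinely a phylogenetic tree on the same taxa~$X$. After deleting the horizontal arcs, every former reticulation~$r$ retains exactly one incoming arc (its vertical arc) and one outgoing arc, so it becomes a degree-$2$ node to be suppressed; every former tail~$u$ of a horizontal arc loses one out-arc, and I would check that~$u$ still has indegree-$1$ and at least one remaining out-arc so that after suppression no malformed nodes remain. The key point is that after deletion every node has indegree at most~$1$, so~$T$ has no reticulations; since deletion and suppression cannot create or destroy leaves, the leaf set is still~$X$, and acyclicity is inherited from~$N$. Connectedness follows because each reticulation still reaches the rest of the network through its vertical arc, so~$T$ is a tree.

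To exhibit~$N$ as tree-based with base tree~$T$, I would reverse the construction: subdivide arcs of~$T$ to recreate the attachment points that were suppressed (the former tails~$u$ and the former reticulations~$r$), then reinsert each horizontal arc $(u,r)$ as a linking arc between the corresponding attachment points. I would then check the two conditions in the definition of tree-based: that no node ends up with total degree exceeding~$3$ (immediate, since we are simply restoring arcs that were present in the binary network~$N$), and, crucially, that the resulting graph is acyclic. Acyclicity is where the labelling does the real work, and I expect this to be the main obstacle: I would argue that~$t$ is a topological-order witness, using Property~1 to see that~$t$ is nondecreasing along every arc and strictly increasing along every non-horizontal arc, so any directed cycle would have to consist entirely of horizontal arcs with a constant $t$-value, which is impossible because each node has at most one incident horizontal arc as a tail and the horizontal arcs form a matching-like structure on nodes of equal label. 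Since~$N$ is given to be acyclic anyway, this really amounts to confirming that the attachment-point subdivision does not introduce new cycles, which it cannot, as it only subdivides existing arcs. Assembling these checks yields that~$N$ is tree-based with the stated base tree, establishing the observation.
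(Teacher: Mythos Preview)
Your proposal is correct and follows exactly the approach the paper intends; the paper in fact gives no formal proof at all, stating only that ``a base tree can be obtained by deleting all reticulate arcs with endpoints of the same label'' and that the observation ``follows immediately.'' Your write-up is a faithful rigorous expansion of that one sentence.

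Two small remarks. First, when you say you would check that the tail~$u$ of a horizontal arc has indegree~$1$ and retains an out-arc, make explicit that~$u$ must be a tree node and can lose at most one out-arc: Property~2 forces some child of~$u$ to have strictly larger label, so~$u$ cannot be a reticulation (whose unique child would then have equal label) and cannot have both out-arcs horizontal. Second, your acyclicity paragraph is more elaborate than necessary: once you observe that subdividing~$T$ at the suppressed nodes and reinserting the deleted horizontal arcs literally reconstructs~$N$, acyclicity is inherited from~$N$ and the topological-order argument via~$t$ is not needed.
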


The converse of \Cref{obs:HGT-->TB} does not hold. \leonew{This can be seen as follows.}
We say that a network \emph{contains a crown} if there exists a set of nodes $\{u_1,\ldots, u_k, v_1,\ldots,v_k\}$ with edges~$\{\leonew{(u_i,v_i), (u_i,v_{i+1})} : i\in [k]\}$ where~$[k]=\{1,\ldots,k\}$, where the indices are taken modulo~$k$. Consider a tree-based network that contains a crown, \leonew{such as the network in~\Cref{fig:crown}. Such a network does not admit an HGT-consistent labelling. To see this, first note that in such a labelling~$t$, without loss of generality, $t(u_1)=t(v_1)$ (by Property~3). Moreover, if \markj{$t(u_i)=t(v_i)$} then it follows that \markj{$t(u_i)<t(v_{i+1})$} (by Property~2) and hence that~\markj{$t(u_{i+1})=t(v_{i+1})$} (by Property~3). Then, by induction, it follows that~\markj{$t(u_1)<t(u_2) <\dots < t(u_k) < t(u_1)$}, a contradiction.}

We now show that every binary orchard network admits an HGT-consistent labelling of a certain form.

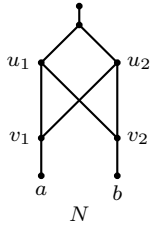
\begin{figure}
    \centering
    \begin{tikzpicture}
	 \tikzset{edge/.style={thick}}
     \tikzset{arc/.style={-Latex,thick}}
     \tikzstyle{every node}=[font=\footnotesize]
	 \begin{scope}[xshift=0cm,yshift=0cm,xscale=.5,yscale=.5]
	\draw[thick, fill, radius=0.06] (0,.5) circle;
	\draw[thick, fill, radius=0.06] (0,0) circle;
	\draw[edge] (0,.5) -- (0,0);
	\draw[edge] (0,0) -- (-1,-1);
	\draw[edge] (0,0) -- (1,-1);
	\draw[thick, fill, radius=0.06] (-1,-1) circle node[left] {$u_1$};
	\draw[thick, fill, radius=0.06] (-1,-3) circle node[left] {$v_1$};
	\draw[thick, fill, radius=0.06] (-1,-4) circle node[below] {$a$};
	\draw[thick, fill, radius=0.06] (1,-1) circle node[right] {$u_2$};
	\draw[thick, fill, radius=0.06] (1,-3) circle node[right] {$v_2$};
	\draw[thick, fill, radius=0.06] (1,-4) circle node[below] {$b$};
	\draw[edge] (-1,-1) -- (1,-3);
	\draw[edge] (-1,-1) -- (-1,-3);
	\draw[edge] (1,-1) -- (1,-3);
	\draw[edge] (1,-1) -- (-1,-3);
	\draw[edge] (-1,-3) -- (-1,-4);
	\draw[edge] (1,-3) -- (1,-4);
	\draw (0,-5) node {$N$};
	\end{scope}
	\end{tikzpicture}
    \caption{A binary tree-based network~$N$ on the taxa set~$\{a,b\}$ \leonew{that does not admit an HGT-consistent labelling since it contains} a crown on the nodes~\leonew{$u_1,u_2,v_1,v_2$.}}
    \label{fig:crown}
\end{figure}

\begin{lemma}\label{lem:OrchardThenHGTConsistent}
Each binary orchard network admits an HGT-consistent labelling $t$ where
two nodes have the same label \leonew{only if they are} a parent-child pair where the parent is a tree node and the child a reticulation.
\end{lemma}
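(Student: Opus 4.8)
The plan is to induct on the length of a cherry-picking sequence $S$ that reduces $N$, running the reduction \emph{backwards}. Concretely, write $S=(x_1,y_1)(x_2,y_2)\cdots(x_m,y_m)$ and set $N'=N(x_1,y_1)$. Since picking a reducible pair from an orchard network yields an orchard network, $N'$ is orchard and is reduced by the shorter sequence $(x_2,y_2)\cdots(x_m,y_m)$; by the induction hypothesis $N'$ carries an HGT-consistent labelling $t'$ whose only equal-label pairs are tree-node/reticulation parent--child pairs (this is the strengthened invariant I carry through). I would then recover $N$ from $N'$ by \emph{undoing} the first reduction and extend $t'$ to the one or two reinserted nodes, preserving Properties~1--3 and the special form of equal labels. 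The base case is the one-leaf tree (a root $\rho$ with leaf $\ell$), where $t(\rho)=0,\ t(\ell)=1$ works trivially.

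For the inductive step there are two cases matching \Cref{def:reduction}. If $(x_1,y_1)$ is a cherry, then in $N'$ the leaf $y_1$ hangs by a tree arc $(g,y_1)$ off the former grandparent $g$; to undo I subdivide this arc by the reinserted common parent $p$ and reattach $x_1$ below $p$. As $t'(g)<t'(y_1)$ (a tree arc into a leaf), I pick $t(p)$ strictly between them and $t(x_1)$ above $t(p)$, leaving all other labels equal to $t'$; checking the three properties is routine and no new equal-label pair is created. If $(x_1,y_1)$ is a reticulated cherry, then $N'$ has $x_1$ on a tree arc $(g_x,x_1)$ and $y_1$ on a tree arc $(g_y,y_1)$; to undo I subdivide the first by the reinserted reticulation $p_x$, subdivide the second by the reinserted tree node $p_y$, and add the reticulation arc $(p_y,p_x)$. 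I want to declare $(p_y,p_x)$ horizontal, i.e.\ $t(p_x)=t(p_y)=:\tau$, which is exactly the one permitted equal-label pair (tree node $p_y$ above reticulation $p_x$), so the strengthened invariant is maintained.

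The main obstacle is precisely this last case: to place $p_x$ and $p_y$ at a common time $\tau$ I need $\max(t'(g_x),t'(g_y))<\tau$ together with $\tau$ below the labels of the children $x_1$ and $y_1$, and there is no a priori reason that $t'(g_x),t'(g_y),t'(x_1),t'(y_1)$ are ordered so that such a $\tau$ exists. The resolution is the observation that leaf labels are completely unconstrained from below: a leaf has no out-arcs, so increasing $t'(x_1)$ and $t'(y_1)$ cannot violate Property~1 on the arcs into them nor spoil Property~2 at their parents (an increased child still witnesses it). Hence I first raise $t'(x_1),t'(y_1)$ above $\max(t'(g_x),t'(g_y))$, then choose $\tau$ strictly between $\max(t'(g_x),t'(g_y))$ and $\min(t'(x_1),t'(y_1))$. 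With this $\tau$, the arc $(g_y,p_y)$ is a tree arc and strict, $(g_x,p_x)$ is the non-horizontal reticulation arc (strict) while $(p_y,p_x)$ is the horizontal one (equal), and both new tree arcs $(p_x,x_1),(p_y,y_1)$ are strict; Properties~1--3 then follow directly, with Property~3 at $p_x$ holding because exactly one parent (namely $p_y$) shares its time.

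Finally, to guarantee that the \emph{only} equal-label pairs are the intended horizontal arcs, I would choose every newly assigned value (the $t(p),t(x_1)$ in the cherry case, and $\tau,t(x_1),t(y_1)$ in the reticulated case) generically, distinct from all previously used labels except for the single deliberate equality $t(p_x)=t(p_y)$. Since the induction hypothesis already restricts the equal-label pairs of $t'$ to tree-node/reticulation parent--child pairs and each step introduces at most one further such pair, the resulting labelling $t$ has exactly the required form, completing the induction.
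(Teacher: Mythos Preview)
Your proof is correct and follows essentially the same strategy as the paper: reconstruct $N$ by running a cherry-picking sequence backwards and label the one or two nodes introduced at each reattachment step. The paper's execution is a bit tidier---it assigns every internal node created when reattaching $(x_i,y_i)$ the explicit integer label $m+1-i$ and gives every leaf a label exceeding $m$, so the required interval for the new horizontal pair is automatically nonempty and no mid-construction raising of leaf labels (your ``main obstacle'') is ever needed.
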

\begin{proof}
Let $N$ be a binary orchard network \markj{with leaves $l_1, \dots, l_n$}, and let $S=(x_1,y_1),\ldots,(x_m,y_m)$ be a cherry-picking sequence for $N$. Because $N$ is binary, it can be reconstructed from $S$ by starting with the one-leaf tree with leaf $y_m$ and reattaching the pairs from $S$ in reverse order\footnote{We refer the interested reader to~\cite{janssen2021cherry} for more information on this construction.}. Now label the root node $\rho$ with $t(\rho)=0$, each leaf \markj{$l_j$ with $t(l)=m+j$}, and each \leonew{internal} node $v$ added when reattaching the pair $(x_i,y_i)$ with $t(v)=m+1-i$. 

We show that~$t$ is indeed an HGT-consistent labelling of~$N$.
When adding a pair to a network, 
\markj{the two newly introduced nodes \leonew{are not} above any existing \leonew{internal} nodes, and have a greater time labelling than any other existing \leonew{internal} nodes. Thus any \leonew{internal} node has time label greater than or equal to that of any of its parents.}
Adding to the fact that we label the leaves so that they have labels of at least~$m+1$ and internal nodes have labels of at most~$m$, we have that for all arcs~$(u,v)$,~$t(u)\le t(v)$.
The labelling of the leaves also means that two nodes in the network have the same label under~$t$ \markj{only} if they are a parent-child pair where the parent is a tree node and the child is a reticulation. 
Therefore, Property 1 of the HGT-consistent labelling is satisfied.
To see that~$t$ satisfies Property 2, we look at tree nodes and reticulations separately.
A tree node~$u$ has two children, one of which is possibly added to the network at the same time as~$u$. The other child~$v$ of~$u$ is either a leaf or an internal node that is added to the network after~$u$ has been added. But this would mean that~$t(u)<t(v)$.
On the other hand, a reticulation~$r$ has one child~$c$. Every reticulation node is added to the network with one of its \markj{(non-reticulation)} parents; this means that~$c$ is either a leaf or an internal node that is added to the network after~$r$ has been added. This implies~$t(r) < t(c)$.
So Property 2 is satisfied.
Finally, to see that Property 3 is also satisfied, consider a reticulation~$r$ with parents~$u$ and~$v$. The reticulation~$r$ must have been added to the network with one of its parents, say~$u$, so that~$t(u) = t(r)$. This means that the node~$v$ was already in the network when~$r$ was added; due to how we have defined~$t$, we must have that~$t(v) < t(r)$.
Thus~$t$ satisfies Property 3, and therefore it is an HGT-consistent labelling.

In addition, this gives a labelling $t$ \leonew{in which each label is used at most twice.}
Indeed for each $i\in\{1,\ldots,m\}$, the nodes with label $i$ are added to the network when $(x_i,y_i)$ is reattached, and for each such reattachment, at most two nodes are added to the network.
Observe that under this construction, if two nodes have the same label, then they must be a parent-child pair where the parent is a tree node and the child a reticulation.
\end{proof}

\begin{lemma}\label{lem:HGT-->Orchard}
Each binary network that admits an HGT-consistent labelling is orchard.
\end{lemma}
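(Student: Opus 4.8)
The plan is to prove the statement directly by induction, peeling off one reducible pair at a time while preserving an HGT-consistent labelling, until the one-leaf tree is reached. Concretely, given a binary network $N$ with HGT-consistent labelling $t$, I would establish two facts: (a) if $N$ is not already the one-leaf tree, then $N$ has a reducible pair $(x,y)$ sitting ``at the top of time''; and (b) the reduced network $N(x,y)$ still admits an HGT-consistent labelling, namely the restriction of $t$ to its nodes. Since each reduction strictly decreases the number of nodes (a leaf disappears in the cherry case, a reticulation in the reticulated-cherry case), iterating produces a sequence $S$ with $NS$ equal to the one-leaf tree, so $N$ is orchard. Here I rely on the excerpt's remark that reducing a pair in a (binary) network again yields a (binary) network, so the inductive hypothesis applies.

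For step (a), let $t^\ast$ be the maximum of $t$ over the internal nodes and let $M$ be the set of internal nodes attaining $t^\ast$. I would first argue that $M$ contains a tree node. By Property 1 any arc between two nodes of $M$ must enter a reticulation, and by Property 2 the root cannot lie in $M$ for a non-trivial $N$; hence if every node of $M$ were a reticulation, repeatedly passing to the horizontal parent (which again lies in $M$ by Property 3 together with the maximality of $t^\ast$) would give an infinite strictly ascending chain in the finite acyclic graph $N$, a contradiction. Now fix a tree node $u\in M$ with children $c_1,c_2$. Every internal child of $u$ lies in $M$ and, by Property 1, is a reticulation; Property 2 then forces at least one child, say $v$, to be a leaf (its time exceeds $t^\ast$). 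If the other child is also a leaf, then $(v,c)$ is a cherry; otherwise the other child is a reticulation $w\in M$ whose unique child $z$ is a leaf by Property 2, and $(z,v)$ is a reticulated cherry because $w$ and $v$ share the parent $u$.

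For step (b), I would keep the values of $t$ on the surviving nodes and check the three properties, which only requires inspecting the arcs created by the suppressions. In the cherry case, deleting $v$ and suppressing $u$ creates an arc from $u$'s parent $g$ to the leaf $c$; since $t(g)\le t(u)=t^\ast < t(c)$ this arc satisfies Property 1, and $c$ witnesses Property 2 for $g$, while no reticulation is touched so Property 3 holds. In the reticulated-cherry case, deleting $(u,w)$ and suppressing $u$ and $w$ creates arcs $(g,v)$ and $(b,z)$, where $b$ is the non-horizontal parent of $w$ (so $t(b)<t^\ast$); both new arcs enter leaves whose times exceed $t^\ast\ge t(g),t(b)$, giving Properties 1 and 2 for $g$ and $b$, and since the only reticulation destroyed is $w$ itself, the parent structure of every remaining reticulation is unchanged, giving Property 3. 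All other nodes, arcs, and labels are untouched, so the restricted labelling is HGT-consistent.

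The main obstacle is step (a): it is not a priori clear that the maximum-time internal nodes contain a usable tree node rather than only reticulations, and attacking a maximum-time reticulation directly does not expose a reducible pair, because its horizontal parent's other child may be internal. The chain argument is precisely what rules this out and pins down where the reducible pair lives; once the correct tree node $u\in M$ is chosen, both the identification of the cherry or reticulated cherry and the verification in step (b) are routine. A minor point is the degenerate base case (a one-leaf network carrying a reticulation would have no reducible pair), but step (a) always produces \emph{two} distinct leaves, $v$ and $c$ (or $v$ and $z$), so a non-trivial HGT-consistent network always has a reducible pair and the induction can only terminate at the one-leaf tree.
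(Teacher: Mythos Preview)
Your proposal is correct and follows essentially the same approach as the paper: locate an internal node of maximum time label, observe that the associated tree node has leaf children forming a cherry or a reticulated cherry, reduce, and restrict~$t$. The only cosmetic difference is that the paper picks any maximum-$t$ internal node~$x$ and, if~$x$ happens to be a reticulation, passes in one step to its horizontal parent (which Property~2 forces to be a tree node), whereas you phrase the same observation as a chain argument inside~$M$; your verification of step~(b) is also spelled out in more detail than the paper's one-line ``restrict~$t$''.
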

\begin{proof}
Suppose a binary network $N$ admits an HGT-consistent labelling $t$. We will prove that $N$ is orchard by proving that each network that admits an HGT-consistent labelling which has at least one internal node must contain a cherry or reticulated cherry. Moreover, after reducing such a pair, the resulting network still admits an HGT-consistent labelling. Therefore, any HGT-consistent network can be reduced to a tree with one leaf, and is thus orchard.

Let $x$ be an internal node with $t(x)$ maximal. If $x$ is a reticulation, then its child $l$ must be a leaf, and one of its parents $p$ has label $t(p)=t(x)$, by Property~3. 
The node~$p$ cannot be a reticulation as this would contradict Property 2 of HGT-consistent labellings.
Hence, the other child $l'$ of $p$ must be a leaf as well, and 
the reducible pair $(l,l')$ is a reticulated cherry in $N$. 
Reducing this reticulated cherry, we obtain a new network $N'$, which 
still admits an HGT-consistent labelling $t'=t\vert_{V(N')}$. 
If $x$ is a tree node, then either both of its children are reticulations, one of its children $v$ is a reticulation node, or both its children are leaves.
In the first case, the reticulation children~$v_1$ and~$v_2$ 
must have labels~$t(v_1) = t(v_2) = t(x)$, since~$x$ has the greatest time label 
out of internal nodes and 
\markj{internal nodes have time label greater than or equal to that of their parents, by Property 1.}
But this contradicts Property 2, so this case is not possible.
In the second case, the reticulation child has label $t(v)=t(x)$, and we can reduce the reticulated cherry involving $x$ and $v$ as in the previous case. In the third case, $x$ has two leaf children, which must thus form a cherry. After reducing this cherry, the network still admits an HGT-consistent labelling, which can be obtained by restricting $t$ to the remaining nodes. 
\end{proof}

A direct consequence of these lemmas is the following new characterization of orchard networks as networks that admit an HGT-consistent labelling.

\begin{theorem}\label{thm:OrchIFFHGT}
A binary network $N$ is orchard if and only if it admits an HGT-consistent labelling.
\end{theorem}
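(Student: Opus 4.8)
The plan is to obtain \Cref{thm:OrchIFFHGT} as an immediate corollary of the two preceding lemmas, since together they establish both directions of the biconditional. Specifically, \Cref{lem:OrchardThenHGTConsistent} gives the forward implication (orchard implies HGT-consistent labelling), while \Cref{lem:HGT-->Orchard} gives the reverse implication (HGT-consistent labelling implies orchard). There is essentially no residual work beyond citing these results.

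For the forward direction, I would argue as follows. Suppose $N$ is a binary orchard network. By \Cref{lem:OrchardThenHGTConsistent}, $N$ admits an HGT-consistent labelling $t$ (indeed one of a special form, but for the theorem we only need existence). This is exactly the statement that $N$ admits an HGT-consistent labelling, so this direction is complete.

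For the reverse direction, suppose $N$ is a binary network admitting an HGT-consistent labelling. Then \Cref{lem:HGT-->Orchard} asserts directly that $N$ is orchard. Combining the two implications yields the biconditional.

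The only thing to be careful about is that both lemmas are stated for \emph{binary} networks, which matches the hypothesis of the theorem precisely; no extension or adaptation is needed here. Consequently I do not anticipate any genuine obstacle: the proof is a one-line assembly of the two lemmas. The conceptual difficulty has already been discharged in proving \Cref{lem:OrchardThenHGTConsistent} (constructing the explicit labelling from a cherry-picking sequence) and \Cref{lem:HGT-->Orchard} (showing that the node of maximal label yields a reducible pair whose reduction preserves HGT-consistency), so the present statement merely records their joint consequence.
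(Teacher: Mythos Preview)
Your proposal is correct and matches the paper's approach exactly: the paper states the theorem as ``a direct consequence of these lemmas'' (namely \Cref{lem:OrchardThenHGTConsistent} and \Cref{lem:HGT-->Orchard}) without giving any further argument. There is nothing to add.
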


The next corollary, which was also shown in \cite{huber2019rooting,van2021unifying}, follows from \Cref{obs:HGT-->TB} and \Cref{thm:OrchIFFHGT}.

\begin{corollary}
The class of binary orchard networks are contained in the class of binary tree-based networks.
\end{corollary}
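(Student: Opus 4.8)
The final statement is the Corollary asserting that binary orchard networks are contained in binary tree-based networks. My plan is to derive this directly by chaining the two earlier results, since the Corollary is flagged as following from \Cref{obs:HGT-->TB} and \Cref{thm:OrchIFFHGT}.

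The plan is to take an arbitrary binary orchard network $N$ and show it is tree-based. First I would invoke \Cref{thm:OrchIFFHGT}, the characterization theorem, which states that a binary network is orchard if and only if it admits an HGT-consistent labelling. Since $N$ is orchard by hypothesis, this immediately yields an HGT-consistent labelling $t$ of $N$. Second, I would apply \Cref{obs:HGT-->TB}, which says that any binary network admitting an HGT-consistent labelling is tree-based; in fact the observation even tells us how to recover a base tree, namely by deleting every reticulation arc whose tail and head receive the same time label. Composing these two steps gives that $N$ is tree-based, which is exactly the claim.

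Concretely, the argument reads: let $N$ be a binary orchard network. By the forward direction of \Cref{thm:OrchIFFHGT}, $N$ admits an HGT-consistent labelling. By \Cref{obs:HGT-->TB}, any such network is tree-based. Hence $N$ is tree-based, and since $N$ was an arbitrary binary orchard network, the class of binary orchard networks is contained in the class of binary tree-based networks.

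There is essentially no obstacle here, since all the real work has already been done in establishing \Cref{lem:OrchardThenHGTConsistent} (the orchard-to-HGT direction, which is the nontrivial half of \Cref{thm:OrchIFFHGT}) and in \Cref{obs:HGT-->TB}. The only thing I would be slightly careful about is that the forward implication of the theorem is precisely the one supplied by \Cref{lem:OrchardThenHGTConsistent}, so the chain of implications is well-founded and does not secretly rely on the converse lemma \Cref{lem:HGT-->Orchard}. It is also worth recalling the strictness remark made earlier in the text, namely that the crown example in \Cref{fig:crown} shows this containment is \emph{proper}: there exist binary tree-based networks (those containing a crown) that admit no HGT-consistent labelling and are therefore not orchard. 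Thus the Corollary captures exactly one direction of an inclusion that is known to be strict.
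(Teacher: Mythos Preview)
Your proposal is correct and follows exactly the route the paper indicates: the Corollary is stated without a separate proof and is simply flagged as a consequence of \Cref{obs:HGT-->TB} and \Cref{thm:OrchIFFHGT}, which is precisely the two-step chain you wrote out. Your added remark on strictness via the crown example is also consistent with the paper's discussion.
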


This means in particular that orchard networks have a base tree.

\subsection{Non-binary orchard networks}\label{sec:NonBinary}

By recalling a key lemma from~\cite{van2021unifying} regarding non-binary orchard networks and their binary resolutions, we extend the HGT-consistent labelling characteristics to non-binary orchard networks.

\begin{lemma}[Lemma 11 of \cite{van2021unifying}]\label{lem:OrchardResolution}
A non-binary network~$N$ is orchard if and only if some binary resolution of~$N$ is orchard.
\end{lemma}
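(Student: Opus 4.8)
The plan is to prove \Cref{lem:OrchardResolution} in both directions, translating between a non-binary network and its binary resolutions. Recall that $N'$ is a binary resolution of $N$ if $N'$ is binary and a network isomorphic to $N$ is obtained from $N'$ by contracting arcs. The key observation I would use throughout is that contracting an arc is the ``inverse'' of expanding a node of high degree into a small binary gadget, and that both cherry reductions and binary resolutions interact well with this local structure.

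For the ``if'' direction, suppose some binary resolution $N'$ of $N$ is orchard. By \Cref{thm:OrchIFFHGT}, $N'$ admits an HGT-consistent labelling. Rather than lifting labellings (which is awkward because $N$ is non-binary and \Cref{def:HGT} is stated only for binary networks), I would instead argue directly with cherry-picking sequences. First I would show that if $N'$ is obtained from $N$ by uncontracting a single arc (i.e.\ $N$ is obtained from $N'$ by contracting one arc $(u,v)$), then any cherry-picking sequence $S$ reducing $N'$ can be mimicked on $N$: reducing a pair $(x,y)$ commutes appropriately with arc contraction, because contracting an internal arc does not destroy any reducible pair among the leaves and does not create spurious dependencies. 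The main technical content is a case analysis verifying that when a pair is picked in $N'$ and the ``same'' pair is picked in $N$, the two resulting networks remain related by contraction of the corresponding (possibly now-trivial) arc. Iterating over all contracted arcs then shows $N$ is reduced to a single-leaf tree by (a modification of) $S$, so $N$ is orchard.

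For the ``only if'' direction, suppose $N$ is orchard, so some cherry-picking sequence $S$ reduces $N$ to the one-leaf tree. Here I would build a binary resolution that preserves orchardness. The natural approach is to resolve each non-binary node greedily while respecting the order imposed by $S$: for a tree node of outdegree $d$, replace it by a caterpillar of $d-1$ binary tree nodes; for a reticulation of indegree $d$, replace it by a path of $d-1$ binary reticulations. I would order these resolutions so that the resulting binary network $N'$ still admits a cherry-picking sequence, obtained by inserting extra pairs into $S$ to peel off the newly created binary gadgets. Equivalently, using \Cref{lem:OrchardThenHGTConsistent}, I can take the HGT-consistent labelling structure implicit in $S$ and refine it across each expanded gadget, assigning strictly increasing (or equal, at reticulations) labels to the internal nodes of each caterpillar/reticulation-path so that Properties 1--3 continue to hold; then \Cref{lem:HGT-->Orchard} certifies that $N'$ is orchard.

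The hard part will be the ``only if'' direction, specifically verifying that the local binary resolution of a single non-binary node can always be performed consistently with the global reduction order without introducing a directed cycle and without violating acyclicity of the HGT-consistent labelling at reticulations (Property~3, which demands exactly one incoming arc be horizontal). Reticulations of large indegree are the delicate case: when expanding such a node into a path of binary reticulations, I must ensure that exactly one arc along the whole gadget is assigned an equal label while the rest are strictly increasing, which requires choosing the orientation of the expansion to match the ``horizontal parent'' identified by the labelling. Since this lemma is quoted verbatim from~\cite{van2021unifying}, the intended proof is presumably a direct induction on the reticulation number or on the number of high-degree nodes, reducing to the single-arc-contraction case handled above; I would therefore structure the whole argument around that single-contraction lemma and then induct.
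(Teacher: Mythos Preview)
The paper does not prove this lemma at all: it is stated as ``Lemma~11 of~\cite{van2021unifying}'' and invoked without proof, so there is no in-paper argument to compare your proposal against. Your sketch is therefore not a reconstruction of anything in the present paper but an attempt to reprove an external result.

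That said, your outline is broadly sensible and avoids the obvious pitfall of circularity: you only invoke \Cref{thm:OrchIFFHGT}, \Cref{lem:OrchardThenHGTConsistent}, and \Cref{lem:HGT-->Orchard} on \emph{binary} networks, which is exactly their stated scope, and these are proved independently of \Cref{lem:OrchardResolution}. The ``if'' direction via a single-arc-contraction lemma (reducible pairs survive contraction of an internal arc, and the reduced networks remain related by the same contraction) is the standard way this is done and should go through with a careful but routine case analysis.

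The ``only if'' direction is where your proposal is thinnest. You correctly identify the delicate point: when a reticulation has indegree $d\geq 3$, you must expand it into a path of $d-1$ binary reticulations and assign labels so that \emph{each} new binary reticulation has exactly one horizontal incoming arc. Your plan to ``choose the orientation of the expansion to match the horizontal parent identified by the labelling'' presupposes you already have an HGT-consistent labelling of $N$, but no such labelling is defined for non-binary networks in this paper (and \Cref{fig:non-orch} shows the naive generalization fails). The cleaner route is the purely combinatorial one you also mention: mimic the cherry-picking sequence $S$ of $N$ on a resolution $N'$, inserting extra reticulated-cherry reductions to peel off the newly created binary reticulations one at a time, in the order dictated by $S$. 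This avoids labellings entirely and is closer to how the result is actually established in~\cite{van2021unifying}.
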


\begin{figure}
    \centering
    \begin{tikzpicture}
	 \tikzset{edge/.style={thick}}
     \tikzset{arc/.style={-Latex,thick}}
     \tikzstyle{every node}=[font=\footnotesize]
	 \begin{scope}[xshift=0cm,yshift=0cm,xscale=.5,yscale=.5]
	\draw[thick, fill, radius=0.06] (0,.5) circle;
	\draw[thick, fill, radius=0.06] (0,0) circle node[left] {$t_1$};;
	\draw[edge] (0,.5) -- (0,0);
	\draw[thick, fill, radius=0.06] (-1,-2) circle node[left] {$t_2$};
	\draw[thick, fill, radius=0.06] (-2,-4) circle node[left] {$t_3$};
	\draw[thick, fill, radius=0.06] (2.5,-5) circle node[right] {$r$};
	\draw[thick, fill, radius=0.06] (-3,-6) circle node[below] {$a$};
	\draw[thick, fill, radius=0.06] (3,-6) circle node[below] {$b$};
	\draw[edge] (0,0) -- (-3,-6);
	\draw[edge] (0,0) -- (3,-6);
	\draw[edge] (-1,-2) -- (2.5,-5);
	\draw[edge] (-2,-4) -- (2.5,-5);
	\end{scope}
	\end{tikzpicture}
    \caption{
    A non-binary orchard network on~$\{a,b\}$ that does not admit a time-labelling on the nodes that adheres to the `reticulations have the same labels as all but one of its parents' rule.
    Indeed, under this rule, exactly two of~$t_1,t_2,t_3$ must have the same time-label, \leonew{but no} labelling can satisfy this rule.
    Therefore, this labelling rule does not fully characterize the class of non-binary orchard networks.
    }
    \label{fig:non-orch}
\end{figure}
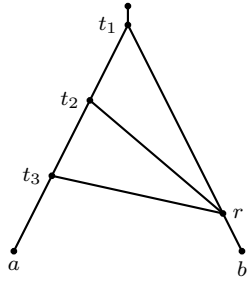

\begin{theorem}\label{thm:NonBinOrch}
A non-binary network~$N$ is orchard if and only if some binary resolution of~$N$ admits an HGT-consistent labelling.
\end{theorem}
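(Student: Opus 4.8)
The plan is to reduce the non-binary statement to the binary case that has already been settled. Observe that \Cref{thm:OrchIFFHGT} tells us that a binary network is orchard if and only if it admits an HGT-consistent labelling. Meanwhile, \Cref{lem:OrchardResolution} tells us that a non-binary network $N$ is orchard if and only if \emph{some} binary resolution of $N$ is orchard. Chaining these two equivalences is the whole idea: $N$ is orchard iff some binary resolution of $N$ is orchard (by \Cref{lem:OrchardResolution}) iff some binary resolution of $N$ admits an HGT-consistent labelling (by \Cref{thm:OrchIFFHGT} applied to that resolution, which is itself a binary network).

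More carefully, I would argue the two directions separately so that the quantifiers line up correctly. For the forward direction, suppose $N$ is orchard. By \Cref{lem:OrchardResolution} there is a binary resolution $N'$ of $N$ that is orchard, and since $N'$ is binary, \Cref{thm:OrchIFFHGT} gives that $N'$ admits an HGT-consistent labelling; this $N'$ is the desired binary resolution. For the reverse direction, suppose some binary resolution $N'$ of $N$ admits an HGT-consistent labelling. Then $N'$ is binary, so \Cref{thm:OrchIFFHGT} shows $N'$ is orchard, and \Cref{lem:OrchardResolution} then shows $N$ is orchard. Both directions are symmetric invocations of the same two results, so the proof is essentially a two-line chain of equivalences.

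I do not expect a genuine obstacle here, since all the mathematical content lives in \Cref{thm:OrchIFFHGT} and \Cref{lem:OrchardResolution}, both of which I may assume. The only subtlety worth flagging is the placement of the existential quantifier: the statement asserts that \emph{some} binary resolution admits an HGT-consistent labelling, not that \emph{every} one does, and this matches the ``some binary resolution'' in \Cref{lem:OrchardResolution} exactly, so no strengthening is needed. The example in \Cref{fig:non-orch} is precisely the warning that one cannot hope to label the non-binary network itself directly under the naive ``reticulations share a label with all but one parent'' rule; this is why the characterization must be routed through a binary resolution rather than stated intrinsically on $N$. I would include a sentence pointing to that figure to explain why the theorem is phrased in terms of a resolution. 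Apart from that, the proof is immediate and requires no new constructions.
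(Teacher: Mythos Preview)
Your proposal is correct and matches the paper's own proof exactly: the paper simply states that the combination of \Cref{thm:OrchIFFHGT} and \Cref{lem:OrchardResolution} immediately gives the claim. Your careful unpacking of the existential quantifier is sound but not needed, since both cited results already use ``some binary resolution'' in the same way.
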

\begin{proof}
The combination of \Cref{thm:OrchIFFHGT} and \Cref{lem:OrchardResolution} immediately gives the claim.
\end{proof}

Ideally, we \leonew{would like to} extend this characterization by finding a direct time labelling of non-binary networks that captures orchard networks, with a meaningful biological interpretation.
A natural generalization would be to consider a labelling where every reticulation is contemporaneous with all but one parent in the network. 
This implies the existence of a base tree, for which all arcs that are not in the base tree are horizontal arcs.
Unfortunately, this characterization does not fully capture the class of non-binary orchard networks.
\Cref{fig:non-orch} gives an example of a non-binary orchard network that does not admit a labelling under this property.

\section{Orchard network space}
We prove \leo{that} the space of binary orchard networks is connected using a strategy that is reminiscent of the proofs of connectedness in~\cite{janssen2021heading} for local head moves and in~\cite{erdHos2021rooted} for tree-based networks. We first move all reticulations to the top of the network
and then change the pendant trees below these reticulations. Before we prove connectedness, we will first \leonew{give the formal definitions and} introduce some structures and subgraphs we use in the proofs.

\subsection{Rearrangement moves}\label{sec:moves}
\leonew{We start by defining rSPR and rNNI moves on phylogenetic networks. Intuitively, rSPR moves can be seen as moving either the head or the tail of an arc and rNNI moves are local rSPR moves.}
\begin{definition}\label{def:rSPRrNNI}
(See \Cref{fig:rSPRmove})
Let $N$ be a binary network with \leonew{an arc~$(z,w)$ and an arc~$e$ with endpoints~$x$ and~$y$ (either $e=(x,y)$ or $e=(y,x)$), and let~$p$ and~$c$ be, respectively,} the parent and child of~$x$ that are not~\leonew{$y$}. The \emph{rSPR move} $(p,x,c)\move{e}(z,w)$ consists of the following: replace the arc\leonew{s} $(p,x)$, $(x,c)$, and $(z,w)$ with the arcs $(p,c)$, $(z,x)$, and $(x,w)$. \leonew{If $\{p,c\}\cap\{z,w\}\neq\emptyset$, then the move is an \emph{rNNI move}.}
\end{definition}

\begin{figure}[h]
    \centering
    \begin{tikzpicture}
	 \tikzset{edge/.style={thick}}
     \tikzset{arc/.style={-Latex,thick}}
     \tikzstyle{every node}=[font=\footnotesize]
	 \begin{scope}[xshift=0cm,yshift=0cm,xscale=.5,yscale=.5]
	\draw[thick, fill, radius=0.06] (0,0) circle node[above left] {$p$};
	\draw[thick, fill, radius=0.06] (1,-1) circle node[left] {$x$};
	\draw[thick, fill, radius=0.06] (1,-2) circle node[below] {$c$};
	\draw[thick, fill, radius=0.06] (2,0) circle node[above right] {$y$};
	\draw[edge] (0,0) -- (1,-1);
	\draw[edge] (1,-1) -- (1,-2);
	\draw[edge] (2,0) -- (1,-1);
	\draw (1.3,-.3) node {$e$};
	\draw[thick, fill, radius=0.06] (4,0) circle node[above] {$z$};
	\draw[thick, fill, radius=0.06] (4,-2) circle node[below] {$w$};
	\draw[edge] (4,0) -- (4,-2);
	\draw[arc] (6.3,-1) -- (10.3,-1);
	\draw (8,-1) node[above,scale=.8] {$(p,x,c)\move{e}(z,w)$};
	\end{scope}
	\begin{scope}[xshift=6cm,yshift=0cm,xscale=.5,yscale=.5]
	\draw[thick, fill, radius=0.06] (0,0) circle node[above] {$p$};
	\draw[thick, fill, radius=0.06] (0,-2) circle node[below] {$c$};
	\draw[thick, fill, radius=0.06] (2,0) circle node[above left] {$y$};
	\draw[thick, fill, radius=0.06] (4,0) circle node[above right] {$z$};
	\draw[thick, fill, radius=0.06] (3,-1) circle node[left] {$x$};
	\draw[thick, fill, radius=0.06] (3,-2) circle node[below] {$w$};
	\draw[edge] (0,0) -- (0,-2);
	\draw[edge] (2,0) -- (3,-1);
	\draw[edge] (4,0) -- (3,-1);
	\draw[edge] (3,-1) -- (3,-2);
	\end{scope}
	\begin{scope}[xshift=0cm,yshift=-2cm,xscale=.5,yscale=.5]
	\draw[thick, fill, radius=0.06] (1,0) circle node[above] {$p$};
	\draw[thick, fill, radius=0.06] (1,-1) circle node[left] {$x$};
	\draw[thick, fill, radius=0.06] (0,-2) circle node[below left] {$c$};
	\draw[thick, fill, radius=0.06] (2,-2) circle node[below right] {$y$};
	\draw[edge] (1,0) -- (1,-1);
	\draw[edge] (1,-1) -- (0,-2);
	\draw[edge] (1,-1) -- (2,-2);
	\draw (1.7,-1.3) node {$e$};
	\draw[thick, fill, radius=0.06] (4,0) circle node[above] {$z$};
	\draw[thick, fill, radius=0.06] (4,-2) circle node[below] {$w$};
	\draw[edge] (4,0) -- (4,-2);
	\draw[arc] (6.3,-1) -- (10.3,-1);
	\draw (8,-1) node[above,scale=.8] {$(p,x,c)\move{e}(z,w)$};
	\end{scope}
	\begin{scope}[xshift=6cm,yshift=-2cm,xscale=.5,yscale=.5]
	\draw[thick, fill, radius=0.06] (0,0) circle node[above] {$p$};
	\draw[thick, fill, radius=0.06] (0,-2) circle node[below] {$c$};
	\draw[thick, fill, radius=0.06] (3,0) circle node[above] {$z$};
	\draw[thick, fill, radius=0.06] (3,-1) circle node[left] {$x$};
	\draw[thick, fill, radius=0.06] (2,-2) circle node[below left] {$y$};
	\draw[thick, fill, radius=0.06] (4,-2) circle node[below right] {$w$};
	\draw[edge] (0,0) -- (0,-2);
	\draw[edge] (3,0) -- (3,-1);
	\draw[edge] (3,-1) -- (2,-2);
	\draw[edge] (3,-1) -- (4,-2);
	\end{scope}
	\end{tikzpicture}
    \caption{Application of an rSPR move. \leonew{The top figure shows the case that the arc~$e$ being moved is a reticulation arc, the bottom figure the case that~$e$ is a tree arc.}}
    \label{fig:rSPRmove}
\end{figure}
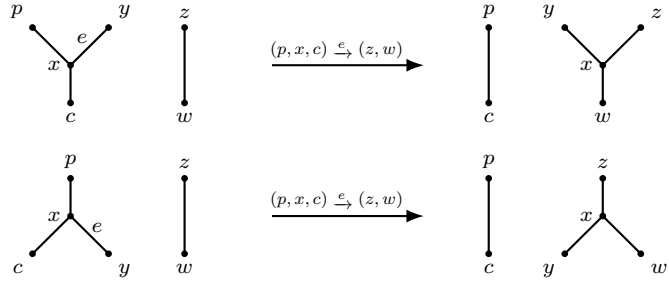


\leonew{An rSPR (or rNNI) move is \emph{valid} if the resulting graph is a network. We first show that this holds automatically \yukinew{for orchard networks} if the resulting graph has an HGT-consistent labelling. For that, we need to define HGT-consistent labellings on more general directed graphs. Note that the following definition is equivalent to Definition~\ref{def:HGT} if~$D$ is a network.}

\begin{definition}\label{def:HGTD}
\leonew{Let~$D=(V,A)$ be a directed graph that may contain parallel arcs, with indegree and outdegree at most~$2$ and total degree at most~$3$. An \emph{HGT-consistent labelling} of~$D$ is a labelling $t:V\rightarrow \RR$ such that:}
\begin{enumerate}
    \item For all arcs $(u,v)$, $t(u)\leq t(v)$ and equality is only allowed if $v$ has indegree~$2$.
    \item For each node $u$ with at least one child, there is a child $v$ of $u$ such that $t(u)<t(v)$.
    \item For each node $r$ with two parents $u$ and $v$, exactly one of $t(u)=t(r)$ and $t(v)=t(r)$ holds.
\end{enumerate}
\end{definition}

\begin{lemma}\label{lem:OrchardValid}
Let $N$ be a binary orchard network \leonew{and~$N'$ the result of an rSPR move on~$N$. If~$N'$ admits an HGT-consistent labelling, then~$N'$ is a network and hence the rSPR move is valid.}
\end{lemma}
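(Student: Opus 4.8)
The plan is to use that an rSPR move on a binary network is a purely local rerouting of arcs, so that the only possible obstructions to $N'$ being a network are a directed cycle or a pair of parallel arcs, and then to rule out both using the hypothesised HGT-consistent labelling $t$ of $N'$. First I would record the degree bookkeeping: the move deletes the three arcs $(p,x),(x,c),(z,w)$ and inserts $(p,c),(z,x),(x,w)$, and at each affected node this replaces one incident arc by another incident arc of the same orientation (this remains true under coincidences such as $z=p$, once arcs that are both deleted and reinserted are cancelled). Hence every node of $N'$ has the same indegree and outdegree as in $N$; in particular $N'$ has indegree and outdegree at most $2$ and total degree at most $3$ (so \Cref{def:HGTD} applies), the node $x$ keeps its type, and, since $N$ has no indegree-$1$-outdegree-$1$ node and a single root, neither does $N'$. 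So once acyclicity and the absence of parallel arcs are established, every node of $N'$ has a valid type and $N'$ is a network.

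The main obstacle is acyclicity, which I would prove by the ``crown'' argument generalised to \Cref{def:HGTD}. Suppose $N'$ contained a directed cycle $a_1\to a_2\to\dots\to a_k\to a_1$ (indices taken modulo $k$). By Property~1 the labelling is non-decreasing along arcs, so summing the inequalities $t(a_i)\le t(a_{i+1})$ around the cycle forces $t(a_i)=t(a_{i+1})$ for every $i$; by Property~1 each such equality requires the head $a_{i+1}$ to have indegree $2$. Thus every cycle node has indegree $2$, hence outdegree at most $1$ by the total-degree bound, hence outdegree exactly $1$ with its unique child equal to its cycle-successor, which carries the same label. This contradicts Property~2, which demands a child with strictly larger label. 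Note that this argument also covers the degenerate cycles of length $1$ (self-loops) and $2$ that some coincidences among $p,c,z,w,x,y$ might otherwise produce.

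It then remains to exclude parallel arcs and to conclude. If $N'$ had two parallel arcs from a node $u$ to a node $r$, then the two parents of $r$ appearing in Property~3 would both be $u$, so the conditions $t(u)=t(r)$ and $t(v)=t(r)$ coincide and are either both true or both false; ``exactly one'' can never hold, a contradiction. Combining the degree bookkeeping with acyclicity and the absence of parallel arcs, $N'$ is a directed acyclic graph with a single root, correctly typed nodes, and no multi-edges, and is therefore a binary network; hence the rSPR move is valid. I expect the acyclicity step to be where essentially all the work lies, with the parallel-arc exclusion and the degree bookkeeping being short.
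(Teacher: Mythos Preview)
Your proposal is correct and follows essentially the same approach as the paper: record that rSPR moves preserve all degrees, then use the HGT-consistent labelling to rule out directed cycles (via Property~1 forcing equal labels and indegree~$2$ around the cycle, then Property~2 giving the contradiction) and parallel arcs. The only minor difference is in the parallel-arc step: the paper derives $t(a)=t(b)$ from Property~3 and then applies Property~2 (both children of $a$ equal $b$ with the same label), whereas you invoke the ``exactly one'' clause of Property~3 directly; both arguments work.
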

\begin{proof}
\leonew{Suppose the rSPR move is $(p,x,c)\move{e}(z,w)$,} \yukinew{where~$e$ is the edge incident on~$x$ that is not incident on~$p$ nor~$c$.}
Let~$t$ be an HGT-consistent labelling of~$N'$.

First note that $N'$ cannot contain parallel arcs \leonew{say from~$a$ to~$b$}. Indeed, this would make $b$ into a reticulation node, and one of its parents must have the same label, so $t(a)=t(b)$. However, at least one child of $a$ must have a larger label than $a$ by Property 2 \markj{of Definition~\ref{def:HGTD}}, so $t(a)<t(b)$, a contradiction.

\leonew{In addition}, $N'$ cannot contain a directed cycle. 
Suppose for a contradiction that~$N'$ does contain a directed cycle.
Then, by Property 1 \markj{of Definition~\ref{def:HGTD}}, all nodes in this cycle must have the same label, and they must be reticulation nodes. However, reticulation nodes only have one outgoing arc, and the head of this arc must have a strictly larger label, a contradiction. \leonew{Hence, $N'$ cannot contain a directed cycle.}

\leonew{Since~$N'$ does not contain parallel arcs or directed cycles, and rSPR moves do not change the degrees, if follows that~$N'$ is a network and hence the rSPR move is valid.}
\end{proof}
If the result of a valid rNNI move on~$N$ \leonew{is a network} that is isomorphic to network~$N'$ \leonew{(respecting leaf labels)}, then we say that \emph{$N$ can be transformed into $N'$} using one rNNI move. \leonew{It is not too difficult to observe from the definition that rNNI moves are symmetric, i.e.,~$N$ can be transformed into~$N'$ using one rNNI move if and only if~$N'$ can be transformed into~$N$ using one rNNI move.}

\begin{definition}
The \emph{rNNI space of orchard networks} with $n$ leaves and $k$ reticulations is the graph $\Orch(n,k)$, whose nodes are binary orchard networks, and there is an edge between two networks if one can be transformed into the other in one rNNI move.
\end{definition}

\subsection{Connectedness of the rNNI space of orchard networks}

\leonew{In this section, we prove that $\Orch(n,k)$ is connected for all~$n$ and~$k$. The main idea of the proof is to show that we can transform any orchard network into some canonical network in which all reticulations are stacked just below the root of the network. This is formalized as follows. See also Figure~\ref{fig:NeatlyTop}.}

\begin{definition}
Let $N$ be a binary \leonew{orchard} network where~\leonew{$v_\rho$} is the child of the root. Then we say that $N$ has $k$ \emph{reticulations at the top} if it contains two directed paths $v_\rho,a_1,\ldots,a_k$ and $v_\rho,b_1,\ldots,b_k$ where all $a_i$ and $b_j$ are distinct, and a set of~$k$ reticulation arcs  $\{(x_i,y_i)\}_{i=1}^k$ where $\{x_i,y_i\}=\{a_i,b_i\}$, which are called the \emph{\leonew{horizontal} arcs at the top}. 
\leonew{In addition, there is no arc between the child of~$y_k$ and the child of~$x_k$ that is not~$y_k$ (because otherwise~$N$ would have $k+1$ reticulations at the top). 
The reticulations~$y_1,\ldots ,y_k$ are \emph{reticulations at the top}.} If $(x_i,y_i)=(a_i,b_i)$ for all $i$, then we say that $N$ has $k$ reticulations \emph{neatly} at the top.
\end{definition}


\leonew{We now show that the name ``horizontal arcs at the top'' is well chosen, i.e., that they are horizontal with respect to any HGT-consistent labelling.}

\begin{lemma}\label{lem:horizontal}
\leonew{If~$N$ is a binary orchard network with~$k$ reticulations at the top, and~$t$ an HGT-consistent labelling, then for each horizontal arc at the top $(x_i,y_i)$, holds that $t(x_i)=t(y_i)$.}
\end{lemma}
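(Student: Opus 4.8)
The plan is to fix an arbitrary HGT-consistent labelling $t$ and treat the horizontal arcs one at a time, by induction on $i$ from $1$ to $k$. The key structural observation is that each $y_i$ is a reticulation whose two parents are the tail $x_i$ of the horizontal arc and its predecessor $q_i$ along the path through $y_i$; thus $q_1=v_\rho$, and for $i\ge 2$ the node $q_i$ is whichever of $a_{i-1},b_{i-1}$ precedes $y_i$. By Property~1 we always have $t(q_i)\le t(y_i)$ and $t(x_i)\le t(y_i)$, and by Property~3 exactly one of $q_i,x_i$ shares the label $t(y_i)$. Hence the whole lemma reduces to showing $t(q_i)<t(y_i)$ for every $i$: this rules out $q_i$ as the equal-label parent and forces $t(x_i)=t(y_i)$, which is exactly the claim.

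I would establish $t(q_i)<t(y_i)$ by assuming the contrary, $t(q_i)=t(y_i)$, and deriving a contradiction in three situations. If $i=1$, then $q_1=v_\rho$, and the assumption $t(v_\rho)=t(y_1)$ together with Property~3 forces $t(x_1)\neq t(y_1)$, hence $t(x_1)<t(y_1)$ by Property~1; combined with $t(v_\rho)\le t(x_1)$ this gives $t(y_1)=t(v_\rho)\le t(x_1)<t(y_1)$, a contradiction. If $i\ge 2$ and $q_i$ is a reticulation (so $q_i=y_{i-1}$, whose unique child is $y_i$), then $t(q_i)=t(y_i)$ leaves $q_i$ with no child of strictly larger label, violating Property~2. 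If $i\ge 2$ and $q_i$ is a tree node (so $q_i=x_{i-1}$), its two children are the path-child $y_i$ and the horizontal-child $y_{i-1}$; the inductive hypothesis $t(x_{i-1})=t(y_{i-1})$ makes $t(y_{i-1})=t(q_i)=t(y_i)$, so again neither child exceeds $t(q_i)$ and Property~2 is violated. In every case we reach a contradiction, so $t(q_i)<t(y_i)$, and Property~3 then yields $t(x_i)=t(y_i)$.

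The only place the inductive hypothesis is genuinely used is the tree-node subcase, and I expect that to be the crux: one must correctly identify the two children of $q_i=x_{i-1}$ as the path-child $y_i$ and the horizontal-child $y_{i-1}$, and then invoke the already-established horizontality $t(x_{i-1})=t(y_{i-1})$ to block \emph{both} children from carrying the strictly larger label demanded by Property~2. The remaining bookkeeping — verifying that $q_i$ is well defined and equals one of $a_{i-1},b_{i-1}$, and that a reticulation $q_i$ has $y_i$ as its sole child — is routine from the ladder structure together with the binary degree constraints. Note that the maximality clause in the definition (no arc between the children of $x_k$ and $y_k$) plays no role here; it only pins down the value of $k$.
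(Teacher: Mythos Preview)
Your proof is correct and follows essentially the same inductive strategy as the paper's: both argue, for each~$i$, that the non-horizontal parent of~$y_i$ cannot be the equal-label parent guaranteed by Property~3. The only organizational difference is that the paper avoids your case split on the type of~$q_i$ by focusing instead on the tree node~$x_i$: using the inductive hypothesis $t(x_{i-1})=t(y_{i-1})$, it notes that the parent of~$x_i$ (whichever of $x_{i-1},y_{i-1}$ it is) has label $t(x_{i-1})$, and then Property~1 gives $t(x_i)>t(x_{i-1})$ directly, forcing $t(y_i)\ge t(x_i)>t(q_i)$. Your argument achieves the same inequality via Property~2 at~$q_i$ instead, which requires the split into the reticulation and tree-node subcases but is equally valid.
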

\begin{proof}
\leonew{The proof is by induction on~$i$. First consider~$i=1$. Since~$y_1$ is a reticulation, we have, by the definition of HGT-consistent labelling, that either $t(y_1)=t(x_1)$ or $t(y_1)=t(v_\rho)$. We have $t(y_1)\geq t(x_1)$ because $(x_1,y_1)$ is an arc. Furthermore, $t(x_1)>t(v_\rho)$ because~$x_1$ is not a reticulation. Hence, we have $t(y_1)\geq t(x_1)>t(v_\rho)$. So $t(y_1)=t(v_\rho)$ is not possible and we must have $t(y_1)=t(x_1)$.}

\leonew{Now assume that $t(x_i)=t(y_i)$. We will show that $t(x_{i+1})=t(y_{i+1})$. First note that we do not know whether~$x_i$ is a parent of~$x_{i+1}$ and~$y_i$ of~$y_{i+1}$ or if~$x_i$ is a parent of~$y_{i+1}$ and~$y_i$ of~$x_{i+1}$, but this does not matter for the proof since $t(x_i)=t(y_i)$. Since~$y_{i+1}$ is a reticulation, we have that either $t(y_{i+1})=t(x_{i+1})$ or $t(y_{i+1})=t(x_i)=t(y_i)$. We have $t(y_{i+1})\geq t(x_{i+1})$ because $(x_{i+1},y_{i+1})$ is an arc. Furthermore, $t(x_{i+1})>t(x_i)=t(y_i)$ because~$x_{i+1}$ is not a reticulation. Hence, we have $t(y_{i+1})\geq t(x_{i+1})>t(x_i)=t(y_i)$. So $t(y_{i+1})=t(x_i)=t(y_i)$ is not possible and we must have $t(y_{i+1})=t(x_{i+1})$.}
\end{proof}


\leonew{\emph{Reorienting} an arc~$(u,v)$ of a network~$N$ refers to modifying~$N$ into a network~$N'$ that is isomorphic to~$N$ with~$(u,v)$ replaced by~$(v,u)$.}
Note that \leonew{reorienting} any subset of the \leonew{horizontal} arcs at the top of a binary orchard network results in a binary orchard network, as the node labelling remains HGT-consistent. \leonew{The following lemma shows that, if a network has~$k$ reticulations at the top and~$k'\leq k$, then the highest~$k'$ horizontal arcs at the top (i.e. the arcs $(x_i,y_i)$ for $i=1,\ldots ,k'$) can be reoriented using one rNNI move, see Figure~\ref{fig:NeatlyTop}.}

\begin{lemma}[Lemma~5 of \cite{janssen2021heading}]\label{lem:ReticsNeatlyTop}
\leonew{Let~$N$ be a binary orchard network with~$k$ reticulations at the top and~$k'\leq k$. Then, using one rNNI move, the highest~$k'$ horizontal arcs at the top can be reoriented.}
\end{lemma}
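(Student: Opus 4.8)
The plan is to produce one explicit rNNI move whose result, although it literally changes only two arcs, is isomorphic to the network obtained from $N$ by reorienting all of the top $k'$ horizontal arcs; the reorientation of a whole prefix will be recovered from a single local crossing via an internal-node relabelling. To set up the move, write $y_{k'}$ for the reticulation at level $k'$ and $x_{k'}$ for the tree node at level $k'$, so that $\{x_{k'},y_{k'}\}=\{a_{k'},b_{k'}\}$ and the horizontal arc at level $k'$ is $(x_{k'},y_{k'})$. Let $g$ be the parent of $y_{k'}$ on its path (so $g=v_\rho$ when $k'=1$), let $d$ be the unique child of $y_{k'}$, and let $d'$ be the child of $x_{k'}$ other than $y_{k'}$; when $k'<k$ the nodes $d,d'$ are the two level-$(k'+1)$ nodes, and when $k'=k$ they are the roots of the two pendant subtrees below level $k$. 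I would then apply the move $(x_{k'},y_{k'},d)\move{(g,y_{k'})}(x_{k'},d')$, i.e.\ move the head $y_{k'}$ of its vertical arc $(g,y_{k'})$ across onto the descending arc $(x_{k'},d')$.

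This is a valid rNNI move by \Cref{def:rSPRrNNI}: the parent of $x=y_{k'}$ other than $y=g$ is $p=x_{k'}$, its child other than $g$ is $c=d$, the target arc is $(z,w)=(x_{k'},d')$, and the locality condition holds because $z=x_{k'}=p$, so $\{p,c\}\cap\{z,w\}\ni x_{k'}$. Because of this degeneracy $p=z$, the three prescribed replacements collapse: the arc $(x_{k'},y_{k'})$ is deleted and immediately recreated, while the two descending arcs $(y_{k'},d)$ and $(x_{k'},d')$ are replaced by the crossed pair $(x_{k'},d)$ and $(y_{k'},d')$. Thus the net effect of the move is exactly to swap the parents of $d$ and $d'$ at the interface below level $k'$, leaving every horizontal arc and everything else in $N$ untouched; I call the resulting network $N''$. (All of $(x_{k'},y_{k'})$, $(y_{k'},d)$, $(x_{k'},d')$ are distinct arcs because the $a_i$ and $b_i$ are distinct, so the move is well defined.)

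It then remains to identify $N''$ with $N'$, the network obtained by reorienting $(x_i,y_i)$ for $i=1,\dots,k'$. For this I would relabel the internal nodes of $N'$ by the permutation $\sigma$ that swaps $a_i\leftrightarrow b_i$ for every $i\le k'$ and fixes all other nodes, in particular $v_\rho$, every node at a level greater than $k'$, and every leaf. Since $\{x_i,y_i\}=\{a_i,b_i\}$, the map $\sigma$ swaps $x_i\leftrightarrow y_i$ for $i\le k'$, so it sends each reoriented arc $(y_i,x_i)$ back to $(x_i,y_i)$, merely renames the two descending arcs at each level $i<k'$, and fixes the two arcs out of $v_\rho$ as a set. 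Tracking $\sigma$ arc by arc shows that the only genuine difference between $\sigma(N')$ and $N$ is precisely the crossing of the two descending arcs at the interface below level $k'$ — that is, $\sigma(N')=N''$. As $\sigma$ fixes all leaves, it is an isomorphism, giving $N''\cong N'$, so the single move realises the desired reorientation. Validity of the move as a network is then immediate from \Cref{lem:OrchardValid}, since a reorientation of horizontal arcs at the top is again orchard and hence $N''\cong N'$ admits an HGT-consistent labelling.

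The step I expect to be the real content, and the main obstacle, is the relabelling isomorphism: it is what turns a single local crossing at one interface into the simultaneous reorientation of an entire prefix of $k'$ arcs, which is far from obvious a priori. The fiddly part of carrying it out is doing the bookkeeping uniformly: one must allow either orientation of each of the top $k'$ arcs (the network need not be neatly at the top), and one must separately treat the boundary case $k'=k$, where ``the interface below level $k'$'' consists of the two pendant subtree roots and the crossing therefore swaps those pendant subtrees. Checking that the same move and the same relabelling argument go through in all of these cases is the only place where care is needed.
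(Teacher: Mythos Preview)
Your proposal is correct, and it matches the paper's approach. The paper does not itself prove this lemma---it is cited from \cite{janssen2021heading}---but \Cref{fig:NeatlyTop} illustrates precisely the move you describe: in that figure the move $(a_2,b_2,b_3)\move{(b_1,b_2)}(a_2,a_3)$ is exactly your $(x_{k'},y_{k'},d)\move{(g,y_{k'})}(x_{k'},d')$ with $k'=2$, and its effect is the crossing of the two descending arcs at level~$k'$. Your relabelling argument via $\sigma$ swapping $a_i\leftrightarrow b_i$ for $i\le k'$ is the right way to make rigorous why this single crossing is isomorphic to reorienting the whole prefix, and your appeal to \Cref{lem:OrchardValid} for validity is exactly how the paper handles such verifications elsewhere.
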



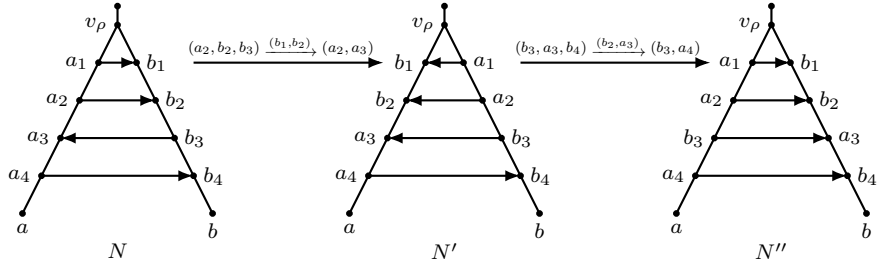
\begin{figure}
    \centering
    \begin{tikzpicture}
	 \tikzset{edge/.style={thick}}
	 \tikzset{arc/.style={-{Latex[length=2mm]},thick}}
     \tikzstyle{every node}=[font=\footnotesize]
	 \begin{scope}[xshift=0cm,yshift=0cm,xscale=.5,yscale=.5]
	\draw[thick, fill, radius=0.06] (0,.5) circle;
	\draw[thick, fill, radius=0.06] (0,0) circle node[left] {$v_\rho$};
	\draw[edge] (0,.5) -- (0,0);
	\draw[thick, fill, radius=0.06] (-2.5,-5) circle node[below] {$a$};
	\draw[thick, fill, radius=0.06] (2.5,-5) circle node[below] {$b$};
	\draw[edge] (0,0) -- (-2.5,-5);
	\draw[edge] (0,0) -- (2.5,-5);
	\draw[thick, fill, radius=0.06] (-.5,-1) circle node[left] {$a_1$};
	\draw[thick, fill, radius=0.06] (-1,-2) circle node[left] {$a_2$};
	\draw[thick, fill, radius=0.06] (-1.5,-3) circle node[left] {$a_3$};
	\draw[thick, fill, radius=0.06] (-2,-4) circle node[left] {$a_4$};
	\draw[thick, fill, radius=0.06] (.5,-1) circle node[right] {$b_1$};
	\draw[thick, fill, radius=0.06] (1,-2) circle node[right] {$b_2$};
	\draw[thick, fill, radius=0.06] (1.5,-3) circle node[right] {$b_3$};
	\draw[thick, fill, radius=0.06] (2,-4) circle node[right] {$b_4$};
	\draw[arc] (-.5,-1) -- (.5,-1);
	\draw[arc] (-1,-2) -- (1,-2);
	\draw[arc] (1.5,-3) -- (-1.5,-3);
	\draw[arc] (-2,-4) -- (2,-4);
	\draw (0,-6) node {$N$};
	\draw[arc] (2,-1) -- (7,-1);
	\draw (4.3,-1) node[above,scale=.7] {$(a_2,b_2,b_3)\move{(b_1,b_2)}(a_2,a_3)$};
	\end{scope}
	\begin{scope}[xshift=4.3cm,yshift=0cm,xscale=.5,yscale=.5]
	\draw[thick, fill, radius=0.06] (0,.5) circle;
	\draw[thick, fill, radius=0.06] (0,0) circle node[left] {$v_\rho$};
	\draw[edge] (0,.5) -- (0,0);
	\draw[thick, fill, radius=0.06] (-2.5,-5) circle node[below] {$a$};
	\draw[thick, fill, radius=0.06] (2.5,-5) circle node[below] {$b$};
	\draw[edge] (0,0) -- (-2.5,-5);
	\draw[edge] (0,0) -- (2.5,-5);
	\draw[thick, fill, radius=0.06] (-.5,-1) circle node[left] {$b_1$};
	\draw[thick, fill, radius=0.06] (-1,-2) circle node[left] {$b_2$};
	\draw[thick, fill, radius=0.06] (-1.5,-3) circle node[left] {$a_3$};
	\draw[thick, fill, radius=0.06] (-2,-4) circle node[left] {$a_4$};
	\draw[thick, fill, radius=0.06] (.5,-1) circle node[right] {$a_1$};
	\draw[thick, fill, radius=0.06] (1,-2) circle node[right] {$a_2$};
	\draw[thick, fill, radius=0.06] (1.5,-3) circle node[right] {$b_3$};
	\draw[thick, fill, radius=0.06] (2,-4) circle node[right] {$b_4$};
	\draw[arc] (.5,-1) -- (-.5,-1);
	\draw[arc] (1,-2) -- (-1,-2);
	\draw[arc] (1.5,-3) -- (-1.5,-3);
	\draw[arc] (-2,-4) -- (2,-4);
	\draw (0,-6) node {$N'$};
	\draw[arc] (2,-1) -- (7,-1);
	\draw (4.3,-1) node[above,scale=.7] {$(b_3,a_3,b_4)\move{(b_2,a_3)}(b_3,a_4)$};
	\end{scope}
	\begin{scope}[xshift=8.6cm,yshift=0cm,xscale=.5,yscale=.5]
	\draw[thick, fill, radius=0.06] (0,.5) circle;
	\draw[thick, fill, radius=0.06] (0,0) circle node[left] {$v_\rho$};
	\draw[edge] (0,.5) -- (0,0);
	\draw[thick, fill, radius=0.06] (-2.5,-5) circle node[below] {$a$};
	\draw[thick, fill, radius=0.06] (2.5,-5) circle node[below] {$b$};
	\draw[edge] (0,0) -- (-2.5,-5);
	\draw[edge] (0,0) -- (2.5,-5);
	\draw[thick, fill, radius=0.06] (-.5,-1) circle node[left] {$a_1$};
	\draw[thick, fill, radius=0.06] (-1,-2) circle node[left] {$a_2$};
	\draw[thick, fill, radius=0.06] (-1.5,-3) circle node[left] {$b_3$};
	\draw[thick, fill, radius=0.06] (-2,-4) circle node[left] {$a_4$};
	\draw[thick, fill, radius=0.06] (.5,-1) circle node[right] {$b_1$};
	\draw[thick, fill, radius=0.06] (1,-2) circle node[right] {$b_2$};
	\draw[thick, fill, radius=0.06] (1.5,-3) circle node[right] {$a_3$};
	\draw[thick, fill, radius=0.06] (2,-4) circle node[right] {$b_4$};
	\draw[arc] (-.5,-1) -- (.5,-1);
	\draw[arc] (-1,-2) -- (1,-2);
	\draw[arc] (-1.5,-3) -- (1.5,-3);
	\draw[arc] (-2,-4) -- (2,-4);
	\draw (0,-6) node {$N''$};
	\end{scope}
	\end{tikzpicture}
    \caption{Left: A network~$N$ on a set of taxa~$\{a,b\}$ with four reticulations at the top. The network~$N$ contains a triangle at~$v_\rho$ with arcs~$(v_\rho,a_1), (v_\rho,b_1),$ and~$(a_1,b_1)$, \leonew{where~$v_\rho$ is the child of the root}.
    Middle: A binary network~$N'$ obtained by applying the valid rNNI move~$(a_2,b_2,b_3)\move{(b_1,b_2)}(a_2,a_3)$ on~$N$. 
    This illustrates \Cref{lem:ReticsNeatlyTop}, where the highest \leonew{two horizontal arcs} at the top are \leonew{reoriented}.
    Right: A network~$N''$ by applying the valid rNNI move~$(b_3,a_3,b_4)\move{(b_2,a_3)}(b_3,a_4)$ to~$N'$.
    The highest \leonew{three horizontal arcs} at the top are \leonew{reoriented}.
    The network~$N''$ has four reticulations neatly at the top.}
    \label{fig:NeatlyTop}
\end{figure}

\leonew{We will prove that we can transform any orchard network into a network in which all reticulations are at the top. We first consider the case of moving a reticulation~$r$ that is part of a \emph{triangle}, i.e. when there are arcs $(v,p)$, $(v,r)$, and $(p,r)$. In this case, we also say that the triangle is \emph{at}~$v$. Observe that, by the same argument as in the first part of the proof of Lemma~\ref{lem:horizontal}, $t(p)=t(r)$ for any HGT-consistent labelling~$t$. The following lemma shows that such a reticulation~$r$ can either be moved up or directly to the top.}

\begin{lemma}\label{lem:TriangleToTop}
\leonew{Let~$N$ be a binary orchard network with $k<r(N)$ reticulations at the top. Let~$t$ be an HGT-consistent labelling and~$r$ a reticulation not at the top minimizing~$t(r)$. Suppose that~$r$ is part of a triangle $(w,p)$, $(w,r)$, $(p,r)$. Then we can either reduce the number of nodes above~$r$ by~$1$, in~$2$ rNNI moves, or transform~$N$ into an orchard network with~$k+1$ reticulations at the top, in at most~$4$ rNNI moves.}
\end{lemma}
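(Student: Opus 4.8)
The overall strategy is dictated by \Cref{lem:OrchardValid}: to perform an intended rNNI move it is enough to write down an HGT-consistent labelling of the graph it produces, since such a labelling simultaneously certifies that the move is valid and, by \Cref{thm:OrchIFFHGT}, that the resulting graph is again orchard. Thus the whole proof amounts to specifying a short list of local moves and, for each, a small perturbation of $t$ that still satisfies Properties~1--3. I first fix notation around the triangle: let $g$ be the parent of $w$, let $c_p$ be the child of $p$ other than $r$, and let $c_r$ be the child of $r$. As recalled just before the statement, the horizontal arc of the triangle is $(p,r)$, so $t(p)=t(r)$; since $w$ and $p$ are tree nodes, Property~1 gives $t(g)<t(w)<t(p)=t(r)$. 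Note also that $w\neq v_\rho$, since otherwise $v_\rho$ would have the two children $p,r$ joined by the horizontal arc $(p,r)$, making $r$ a reticulation at the top. Finally, the minimality of $t(r)$ over reticulations not at the top guarantees that every reticulation with a strictly smaller time label already lies in the top stack, which is exactly what I use to detect when the triangle has been pushed up as far as it can go.

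The argument then splits according to whether the triangle still has room above it. \emph{If $g$ is not the bottom of the top stack} (equivalently, there is an internal node strictly between the triangle and the reticulations at the top, and in particular $g\neq v_\rho$), I aim for the first alternative. I slide the entire triangle up past $g$ using two rNNI moves, after which $g$ becomes a descendant of the triangle and hence is no longer an ancestor of $r$; this decreases the number of nodes above $r$ by exactly one. The first move detaches $g$ from the apex of the triangle and the second repairs the remaining triangle arc, so that the triangle is reconstituted one level higher. For each of the two moves I define the new labelling by leaving $t$ unchanged except at the one or two nodes whose vertical position changes, assigning $g$ a value just below $t(w)$ (and above that of its new parent), and then I verify Properties~1--3 directly so that \Cref{lem:OrchardValid} applies.

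\emph{If instead $g$ is the bottom of the top stack} (or $g=v_\rho$ when $k=0$), the triangle cannot be raised further and I aim for the second alternative: I convert it into the $(k+1)$-st reticulation at the top. Using at most four rNNI moves I turn the parent--child pair $w\to p$ into two disjoint one-arc extensions of the two top paths that meet in a single horizontal arc, pushing $w$ below; where it is convenient to first make the existing arcs at the top point the right way I invoke \Cref{lem:ReticsNeatlyTop}. I then check that the result matches the definition of $k+1$ reticulations at the top, in particular that the forbidden arc between the two relevant children does not appear, and again certify each intermediate move by a modified HGT-consistent labelling.

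The part I expect to require the most care is the bookkeeping inside the two move sequences. At every intermediate step I must confirm that the graph is an honest binary network---no parallel arcs, no directed cycle, and unchanged in- and out-degrees---and that the perturbed labelling still meets Properties~1--3; the delicate points are acyclicity and the ``exactly one contemporaneous parent'' condition at $r$, because sliding the triangle temporarily changes which parent of $r$ carries the horizontal arc. A secondary difficulty is pinning down the exact move counts (two in the first case, at most four in the second) and correctly handling the interface with the existing top stack, especially the distinction between $k=0$ and $k\ge 1$ and the verification that the second alternative yields exactly $k+1$, and not more, reticulations at the top.
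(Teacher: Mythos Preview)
Your overall plan---splitting on whether the parent $g$ of the triangle's apex belongs to the top stack, using two rNNI moves to lift the triangle in the first case and up to four to absorb it into the top in the second---is exactly the paper's approach. However, the mechanics you describe for the first case are wrong. You say the two moves slide the triangle ``past $g$'' so that $g$ becomes a \emph{descendant} of the triangle, and you propose relabelling $g$ to sit just below $t(w)$. But $g$ still has its own parent $h$ with $t(h)<t(g)$, and there is no two-move rNNI sequence that pushes all three nodes $w,p,r$ above $g$ while $g$ remains attached below $h$; any attempt either creates a cycle, produces parallel arcs, or leaves the ancestor set of $r$ unchanged. What actually happens is that the \emph{apex} of the triangle migrates from $w$ to $g$: the paper applies $(g,w,p)\move{(w,r)}(g,v)$ followed by $(g,w,r)\move{(w,v)}(r,c_r)$ (with $v$ the other child of $g$), leaving $g$ as the top of a triangle $(g,p),(g,r),(p,r)$ while $w$ slides below $r$. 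It is therefore $w$, not $g$, that drops out of the ancestor set of $r$, and the relabellings perturb $t$ at $w$ (after the first move) and at $p,r$ (after the second), never at $g$.

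Your second case is on the right track; the paper carries it out with one optional reorientation via \Cref{lem:ReticsNeatlyTop} (to make $g$ a reticulation) followed by three explicit moves, and you will need to write those out and verify the intermediate labellings. Incidentally, the boundary situation $g=v_\rho$ with $k=0$ is actually absorbed by the first case: after the two moves the triangle sits at $v_\rho$, so $r$ is already the first reticulation at the top and no separate treatment is needed.
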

\begin{proof}
Let~$t$ be an HGT-consistent labelling for~$N$.
Let~$q$ be the parent of~$w$.
First suppose that~$q$ is not an endpoint of a horizontal arc at the top. Then~$q$ is a tree node by our choice of~$r$. Let~$v$ be its child other than~$w$.
In that case, the graph~$N'$ obtained by rNNI move $(q,w,p)\move{(w,r)}(q,v)$ admits an HGT-consistent labelling (see \Cref{fig:7TriangleUp})
\[t'(x)=\begin{cases}
\min\{t(w),t(v)\}-\epsilon &\mbox{ if } x=w;\\
t(x) &\mbox{ otherwise,} 
\end{cases}\]
where $\epsilon>0$ is small enough.
In particular, choose~$\epsilon$ so that~$t'(q) < t'(w)$. Then,~$N'$ is an orchard network by Lemma~\ref{lem:OrchardValid} and \Cref{thm:OrchIFFHGT}.
Let~$c$ be the child of~$r$ in~$N'$, and we apply to~$N'$ the move $(q,w,r)\move{(w,v)}(r,c)$ to obtain the graph~$N''$.
The graph~$N''$ is an orchard network, since it admits the HGT-consistent labelling
\[t''(x)=\begin{cases}
t'(w)-\epsilon &\mbox{ if } x\in\{p,r\};\\
t'(x) &\mbox{ otherwise,} 
\end{cases}\]
where again,~$\epsilon>0$ is very small.
Observe that~$N''$ 
contains a triangle at~$q$ consisting of the arcs $(q,p)$, $(q,r)$, and $(p,r)$. Hence, we have reduced the number of nodes above~$r$ by~$1$, using~$2$ rNNI moves, and~$r$ \markj{is still a reticulation not at the top minimizing $t''(r)$.}
\medskip

\begin{figure}
    \centering
    \begin{tikzpicture}
	 \tikzset{edge/.style={thick}}
     \tikzset{arc/.style={-{Latex[length=2mm]},thick}}
     \tikzstyle{every node}=[font=\footnotesize]
	 \begin{scope}[xshift=0cm,yshift=0cm,xscale=.5,yscale=.5]
	\draw[edge] (0,.5) -- (0,0);
    \draw[thick, fill, radius=0.06] (0,0) circle node[left] {$q$};
	\draw[thick, fill, radius=0.06] (1,-1) circle node[below] {$v$};
	\draw[edge] (0,0) -- (1,-1);
	\draw[thick, fill, radius=0.06] (-1.5,-1.5) circle node[left] {$w$};
	\draw[edge] (0,0) -- (-1.5,-1.5);
	\draw[thick, fill, radius=0.06] (-2.5,-2.5) circle node[left] {$p$};
	\draw[edge] (-1.5,-1.5) -- (-2.5,-2.5);
	\draw[edge] (-2.5,-2.5) -- (-3,-3);
	\draw[thick, fill, radius=0.06] (-0.5,-2.5) circle node[right] {$r$};
	\draw[edge] (-1.5,-1.5) -- (-0.5,-2.5);
	\draw[edge] (-0.5,-2.5) -- (-0.5,-3.2);
	\draw[thick, fill, radius=0.06] (-0.5,-3.2) circle node[below] {$c$};
	\draw[arc] (-2.5,-2.5) -- (-0.5,-2.5);
	\draw (0,-4.5) node {$N$};
	\draw[arc] (1.8,-2) -- (6.2,-2);
	\draw (3.8,-2) node[above,scale=.8] {$(q,w,p)\move{(w,r)}(q,v)$};
	\end{scope}
	\begin{scope}[xshift=4.8cm,yshift=0cm,xscale=.5,yscale=.5]
	\draw[edge] (0,.5) -- (0,0);
    \draw[thick, fill, radius=0.06] (0,0) circle node[left] {$q$};
	\draw[thick, fill, radius=0.06] (1,-1) circle node[right] {$v$};
	\draw[edge] (0,0) -- (1,-1);
	\draw[thick, fill, radius=0.06] (.7,-.7) circle;
	\draw (.6,-.8) node[above right] {$w$};
	\draw[edge] (0,0) -- (-1.5,-1.5);
	\draw[thick, fill, radius=0.06] (-2.5,-2.5) circle node[left] {$p$};
	\draw[edge] (-1.5,-1.5) -- (-2.5,-2.5);
	\draw[edge] (-2.5,-2.5) -- (-3,-3);
	\draw[thick, fill, radius=0.06] (-0.5,-2.5) circle node[right] {$r$};
	\draw[edge] (.7,-.7) -- (-0.5,-2.5);
	\draw[edge] (-0.5,-2.5) -- (-0.5,-3.2);
	\draw[arc] (-2.5,-2.5) -- (-0.5,-2.5);
	\draw[arc] (1.8,-2) -- (6.2,-2);
	\draw[thick, fill, radius=0.06] (-0.5,-3.2) circle node[below] {$c$};
	\draw (0,-4.5) node {$N'$};
	\draw (3.8,-2) node[above,scale=.8] {$(q,w,r)\move{(w,v)}(r,c)$};
	\end{scope}
	\begin{scope}[xshift=9.6cm,yshift=0cm,xscale=.5,yscale=.5]
	\draw[edge] (0,.5) -- (0,0);
    \draw[thick, fill, radius=0.06] (0,0) circle node[left] {$q$};
	\draw[thick, fill, radius=0.06] (1,-1) circle node[right] {$v$};
	\draw[edge] (0,0) -- (1,-1);
	\draw[thick, fill, radius=0.06] (.7,-.7) circle;
	\draw (.6,-.8) node[above right] {$w$};
	\draw[edge] (0,0) -- (-1.5,-1.5);
	\draw[thick, fill, radius=0.06] (-.4,-.4) circle node[left] {$p$};
	\draw[edge] (-1.5,-1.5) -- (-2.5,-2.5);
	\draw[edge] (-2.5,-2.5) -- (-3,-3);
	\draw[thick, fill, radius=0.06] (.4,-.4) circle;
	\draw (.3,-.5) node[above right] {$r$};
	\draw[edge] (.7,-.7) -- (-0.5,-3.2);
	\draw[arc] (-.4,-.4) -- (.4,-.4);
	\draw (0,-4.5) node {$N''$};
	\draw[thick, fill, radius=0.06] (-0.5,-3.2) circle node[below] {$c$};
	\end{scope}
	\end{tikzpicture}
    \caption{The first case in the proof of \Cref{lem:TriangleToTop}, where~$q$ is not an endpoint of a horizontal arc at the top.
    Two rNNI moves are applied to the orchard network, which results in an orchard network where the triangle is at~$q$. \leonew{The heights of the nodes represent an HGT-consistent labelling.}}
    \label{fig:7TriangleUp}
\end{figure}
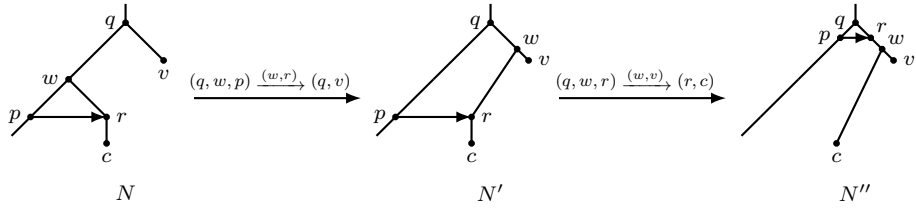

Now suppose that~$q$ is an endpoint of a horizontal arc at the top. Then we can use the following rNNI moves to move $(p,r)$ to the top. First, if~$q$ is not a reticulation, reorient the reticulation arc leaving~$q$ so that~$q$ becomes a reticulation, using one rNNI move (Lemma~\ref{lem:ReticsNeatlyTop}). Let~$s$ be the parent of~$q$ with $t(q)=t(s)$, let~$v$ be the child of~$s$ that is not~$q$ and let~$c$ be the child of~$r$ (see Figure~\ref{fig:ToTheTop}).
Apply the following three moves: $(q,w,p)\move{(w,r)}(s,q)$, $(s,w,q)\move{(w,r)}(s,v)$, and $(s,w,r)\move{(w,v)}(r,c)$. 
To see that the graphs after the three moves are orchard networks, note that the following are HGT-consistent labellings of the respective resulting graphs:

\[t'(x)=\begin{cases}
\min\{t(w),t(v)\} - \epsilon &\mbox{ if } x\in\{q,w\};\\
t(x) &\mbox{ otherwise,} 
\end{cases}\]
\[t''(x)=\begin{cases}
\leonew{t'(s)} &\mbox{ if } x=q;\\
\leonew{t'(s)} + \delta &\mbox{ if } x=w;\\
t'(x) &\mbox{ otherwise,} 
\end{cases}\]
and
\[t'''(x)=\begin{cases}
\leonew{t''(w)-}\gamma &\mbox{ if } x\in\{p,r\};\\
t''(x) &\mbox{ otherwise,} 
\end{cases}\]

where~$0 <\gamma <\delta <\epsilon$ are very small.
Observe that reticulation~$r$ is now at the top and~$(p,r)$ a horizontal arc at the top (see \Cref{fig:ToTheTop}). Hence, the resulting graph \yukinew{obtained using at most~$4$ rNNI moves,} is an orchard network with~$k+1$ reticulations at the top.

\begin{figure}
    \centering
    \begin{tikzpicture}
	 \tikzset{edge/.style={thick}}
     \tikzset{arc/.style={-{Latex[length=2mm]},thick}}
     \tikzstyle{every node}=[font=\footnotesize]
	 \begin{scope}[xshift=0cm,yshift=0cm,xscale=.5,yscale=.5]
	\draw[edge] (-1.5,1) -- (-1.5,.7);
    \draw[thick, fill, radius=0.06] (-1.5,.7) circle node[left] {$q$};
	\draw[thick, fill, radius=0.06] (1,-1) circle node[below] {$v$};
	\draw[thick, fill, radius=0.06] (1,.7) circle node[right] {$s$};
	\draw[edge] (1,1) -- (1,.7);
	\draw[edge] (1,.7) -- (1,-1);
	\draw[arc] (1,.7) -- (-1.5,.7);
	\draw[thick, fill, radius=0.06] (-1.5,-1.5) circle node[left] {$w$};
	\draw[edge] (-1.5,.7) -- (-1.5,-1.5);
	\draw[thick, fill, radius=0.06] (-2.5,-2.5) circle node[left] {$p$};
	\draw[edge] (-1.5,-1.5) -- (-2.5,-2.5);
	\draw[edge] (-2.5,-2.5) -- (-3,-3);
	\draw[thick, fill, radius=0.06] (-0.5,-2.5) circle node[right] {$r$};
	\draw[edge] (-1.5,-1.5) -- (-0.5,-2.5);
	\draw[edge] (-0.5,-2.5) -- (-0.5,-3.2);
	\draw[thick, fill, radius=0.06] (-0.5,-3.2) circle node[below] {$c$};
	\draw[arc] (-2.5,-2.5) -- (-0.5,-2.5);
	\draw[arc] (-.7,-4.8) -- (4,-4.8);
	\draw (1.5,-4.8) node[above,scale=.8] {$(q,w,p)\move{(w,r)}(s,q)$};
	\end{scope}
	\begin{scope}[xshift=3cm,yshift=0cm,xscale=.5,yscale=.5]
	\draw[edge] (-1.5,1) -- (-1.5,-.3);
    \draw[thick, fill, radius=0.06] (-1.5,-.3) circle node[left] {$q$};
	\draw[thick, fill, radius=0.06] (1,-1) circle node[below] {$v$};
	\draw[thick, fill, radius=0.06] (1,.7) circle node[right] {$s$};
	\draw[edge] (1,1) -- (1,.7);
	\draw[edge] (1,.7) -- (1,-1);
	\draw[edge] (1,.7) -- (0,-.3);
	\draw[thick, fill, radius=0.06] (0,-.3) circle node[right] {$w$};
	\draw[arc] (0,-.3) -- (-1.5,-.3);
	\draw[edge] (-1.5,-.3) -- (-2.5,-2.5);
	\draw[thick, fill, radius=0.06] (-2.5,-2.5) circle node[left] {$p$};
	\draw[edge] (-2.5,-2.5) -- (-3,-3);
	\draw[thick, fill, radius=0.06] (-0.5,-2.5) circle node[right] {$r$};
	\draw[edge] (0,-.3) -- (-0.5,-2.5);
	\draw[edge] (-0.5,-2.5) -- (-0.5,-3.2);
	\draw[thick, fill, radius=0.06] (-0.5,-3.2) circle node[below] {$c$};
	\draw[arc] (-2.5,-2.5) -- (-0.5,-2.5);
	\draw[arc] (-.7,-4.8) -- (4,-4.8);
	\draw (1.5,-4.8) node[above,scale=.8] {$(s,w,q)\move{(w,r)}(s,v)$};
	\end{scope}
	\begin{scope}[xshift=6cm,yshift=0cm,xscale=.5,yscale=.5]
	\draw[edge] (-1.5,1) -- (-1.5,.7);
    \draw[thick, fill, radius=0.06] (-1.5,.7) circle node[left] {$q$};
	\draw[thick, fill, radius=0.06] (1,-1) circle node[below] {$v$};
	\draw[thick, fill, radius=0.06] (1,.7) circle node[right] {$s$};
	\draw[edge] (1,1) -- (1,.7);
	\draw[edge] (1,.7) -- (1,-1);
	\draw[thick, fill, radius=0.06] (1,-.3) circle node[right] {$w$};
	\draw[arc] (1,.7) -- (-1.5,.7);
	\draw[edge] (-1.5,.7) -- (-2.5,-2.5);
	\draw[thick, fill, radius=0.06] (-2.5,-2.5) circle node[left] {$p$};
	\draw[edge] (-2.5,-2.5) -- (-3,-3);
	\draw[thick, fill, radius=0.06] (-0.5,-2.5) circle node[right] {$r$};
	\draw[edge] (1,-.3) -- (-0.5,-2.5);
	\draw[edge] (-0.5,-2.5) -- (-0.5,-3.2);
	\draw[thick, fill, radius=0.06] (-0.5,-3.2) circle node[below] {$c$};
	\draw[arc] (-2.5,-2.5) -- (-0.5,-2.5);
	\draw[arc] (-.7,-4.8) -- (4,-4.8);
	\draw (1.5,-4.8) node[above,scale=.8] {$(s,w,r)\move{(w,v)}(r,c)$};
	\end{scope}
	\begin{scope}[xshift=9cm,yshift=0cm,xscale=.5,yscale=.5]
	\draw[edge] (-1.5,1) -- (-1.5,.7);
    \draw[thick, fill, radius=0.06] (-1.5,.7) circle node[left] {$q$};
	\draw[thick, fill, radius=0.06] (1,-1) circle node[below] {$v$};
	\draw[thick, fill, radius=0.06] (1,.7) circle node[right] {$s$};
	\draw[edge] (1,1) -- (1,.7);
	\draw[edge] (1,.7) -- (1,-1);
	\draw[thick, fill, radius=0.06] (1,-.3) circle node[right] {$w$};
	\draw[arc] (1,.7) -- (-1.5,.7);
	\draw[edge] (-1.5,.7) -- (-1.5,.1);
	\draw[thick, fill, radius=0.06] (-1.5,.1) circle node[left] {$p$};
	\draw[edge] (-1.5,.1) -- (-3,-3);
	\draw[thick, fill, radius=0.06] (1,.1) circle node[right] {$r$};
	\draw[edge] (1,-.3) -- (-0.5,-3.2);
	\draw[thick, fill, radius=0.06] (-0.5,-3.2) circle node[below] {$c$};
	\draw[arc] (-1.5,.1) -- (1,.1);
	\end{scope}
	\end{tikzpicture}
    \caption{The second case in the proof of \Cref{lem:TriangleToTop}, where~$q$ is a reticulation at the top. Three rNNI moves are applied to the orchard network, which results in an orchard network where reticulation~$r$ is also at the top. If~$q$ is not a reticulation in the original network, one extra rNNI move is applied first to make~$q$ a reticulation.}
    \label{fig:ToTheTop}
\end{figure}
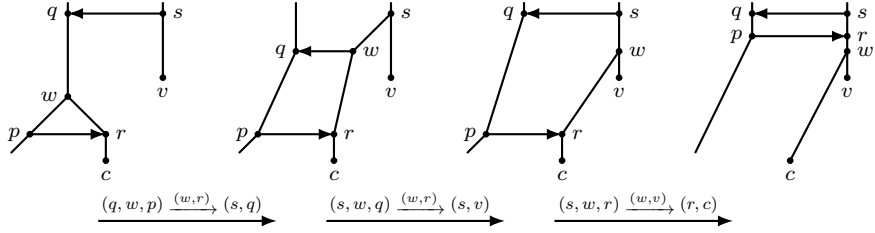

\end{proof}

\leonew{We are now ready to prove that we can move all reticulations to the top.}

\begin{lemma}\label{lem:ReticToTop}
Let $N$ be a binary orchard network with $k<r(N)$ reticulations at the top and~$n$ leaves. 
Then, using at most $2n$ rNNI moves, $N$ can be transformed into a \leonew{binary orchard} network with $k+1$ reticulations at the top.
\end{lemma}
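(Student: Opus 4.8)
The plan is to fix an HGT-consistent labelling $t$ of $N$ (which exists by \Cref{thm:OrchIFFHGT}) and, among all reticulations that are not at the top, to select one, $r$, with $t(r)$ minimal. By Property~3 the node $r$ has one parent $u$ with $t(u)=t(r)$ and one parent $v$ with $t(v)<t(r)$, and by Property~2 the node $u$ must be a tree node, so $(u,r)$ is a horizontal arc. The key structural consequence of minimality is that every reticulation strictly above $r$ already lies in the top structure; hence every proper ancestor of $r$ that is not one of the $2k+1$ nodes of the top structure is a tree node, and the part of $N$ lying strictly below the top structure and weakly above $r$ is a tree. In particular both $u$ and $v$ lie in this tree or in the top structure.

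The strategy is then to push $r$ towards the top one node at a time, always maintaining an HGT-consistent labelling so that validity of each move is automatic by \Cref{lem:OrchardValid}. Whenever $r$ sits in a triangle I would invoke \Cref{lem:TriangleToTop}: a single application either reduces the number of proper ancestors of $r$ by one using $2$ rNNI moves (while keeping $r$ a triangle reticulation of minimal label), or, once the apex of the triangle reaches the top structure, places $r$ at the top in at most $4$ rNNI moves, producing a network with $k+1$ reticulations at the top. When $r$ is not yet in a triangle, the first task is to create one: since $u$ and $v$ both lie in the tree above $r$, I would slide the horizontal parent $u$ upward along this tree with local rNNI moves until it becomes a child of $v$, at which point $r$, $u$ and $v$ form a triangle. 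Each such slide would be certified to be orchard by writing down an explicit HGT-consistent labelling of the intermediate graph (as is done in the proof of \Cref{lem:TriangleToTop}) and applying \Cref{lem:OrchardValid}.

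To bound the number of moves I would argue that each upward step strictly decreases the number of proper ancestors of $r$. Those ancestors that are not in the top structure are all tree nodes, and after contracting the reticulations above $r$ (all of which are at the top) they lie on a path inside a tree with at most $n$ leaves; hence there are at most $n-1$ of them. Since every step costs $2$ rNNI moves and the final placement at the top costs at most a constant number of additional moves, the whole procedure uses at most $2n$ rNNI moves, as required.

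I expect the main obstacle to be the non-triangle step, namely designing the local moves that slide $r$ (equivalently its horizontal arc) upward while preserving an HGT-consistent labelling, and checking in the boundary cases --- when $v$ is one of the nodes $x_k$ or $y_k$ at the bottom of the top structure, which may first require reorienting an arc at the top via \Cref{lem:ReticsNeatlyTop} --- that the resulting graph is still a valid orchard network. A secondary difficulty is the exact accounting: one must count precisely how many tree nodes $r$ has to pass so that the constant cost of the final step does not push the total above $2n$.
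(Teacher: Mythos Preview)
Your overall strategy --- pick a reticulation $r$ not at the top with minimal label, push its horizontal arc upward, and invoke \Cref{lem:TriangleToTop} once a triangle appears --- is exactly the paper's. The gap is precisely where you flag it: your plan for the non-triangle step, ``slide the horizontal parent $u$ upward until it becomes a child of $v$'', does not work as stated, because $v$ need not be an ancestor of $u$. The two parents of $r$ are generally incomparable in the DAG; moving $u$ up along its root-path takes it past the LCA of $u$ and $v$, never past $v$ itself, so no triangle is created this way.

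The paper's fix is not to manufacture a triangle first but to move the \emph{whole} horizontal arc one step at a time, choosing which endpoint to move. Writing $q$ for the parent of your $u$: if $t(v)>t(q)$, move the head $r$ onto the arc above $v$; if $t(v)<t(q)$, move the tail $u$ onto the arc above $q$. Each of these is a single rNNI move, and an explicit HGT-consistent labelling of the result is written down, so \Cref{lem:OrchardValid} applies. To make the remaining case $t(v)=t(q)$ force $v=q$ (and hence a triangle, so that \Cref{lem:TriangleToTop} applies), the paper uses the special labelling from \Cref{lem:OrchardThenHGTConsistent}, in which equal labels occur only on a horizontal parent--child pair; a generic labelling from \Cref{thm:OrchIFFHGT}, as you use, would allow $t(v)=t(q)$ with $v\neq q$ and your case analysis would stall. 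A minor point on counting: the non-top ancestors of $r$ do not ``lie on a path'' (there are two parents, so two root-paths meeting at an LCA), but the bound still holds because they are all internal nodes of the base tree; the paper formalises this via the number $a_b(r)$ of base-tree internal nodes above $r$ and obtains $2(n-2)+4=2n$.
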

\begin{proof}
Let $N$ be an orchard network with HGT-consistent labelling $t$ such that
two nodes have the same label \leonew{only} if they are a parent-child pair for which the parent is a tree node and the child a reticulation (which exists by \Cref{lem:OrchardThenHGTConsistent}).
\leonew{Let~$r$ be a reticulation not at the top minimizing~$t(r)$.}
Let $p$ be the parent of $r$ with $t(r)=t(p)$ and $u$ the other parent of $r$. \leonew{Observe that, since $t(r)=t(p)$, $p$ must have a second child with larger label. Hence,~$p$ cannot be a reticulation. Let~$q$ be the parent of~$p$.} In addition, let $y$ be \leonew{a} lowest \leonew{common} ancestor (LCA) of $u$ and $q$ in $N$.


First suppose $t(u)>t(q)$ (and hence~$u\neq q$). Note that $u$ is either a tree node or a reticulation at the top, by the choice of~$r$. If $u$ is a tree node, we can apply the move \leonew{$(u,r,c)\move{(p,r)}(s,u)$} 
where~$s$ is the parent of~$u$, $v$ is the child of $u$ other than~$r$, and $c$ is the child of $r$ (Figure~\ref{fig:MoveReticUp}). Then
\[t'(x)=
\begin{cases}
t(u) &\mbox{ if } x\in\{p,r\},\\
t(u)+\epsilon &\mbox{ if } x=u,\\
t(x) &\mbox{ otherwise,} 
\end{cases}
\] 
with $\epsilon>0$ small enough, is an HGT labelling of the resulting network, which is therefore an orchard network by \Cref{lem:OrchardValid}. 
In the resulting network, the number of nodes above~$r$ is reduced by~$1$.

If $u$ is a reticulation at the top, then $y$ is the parent of $u$ with $t(y)=t(u)$. However, because~$y$ is above~$q$, we have~$t(y)\leq t(q)$ and so $t(u)\leq t(q)$,
contradicting our assumption $t(u)>t(q)$.
\medskip

The case that $t(u)<t(q)$ is symmetric. We can argue as in the previous paragraph but replacing $u,r$ by $q,p$ respectively, \leonew{see Figure~\ref{fig:MoveReticUp-b}.}


\medskip

The last case is that $t(u)=t(q)$. We claim that~$u=q=y$.
Otherwise, by our choice of the labelling~$t$, we either have an arc~$(q,u)$ and~$u$ is a reticulation, or we have an arc~$(u,q)$ and~$q$ is a reticulation.
In the first scenario,~$u$ must be a reticulation at the top by our choice of~$r$. However, since~$r$ is a child of~$u$ and~$p$ a child of~$q$, it follows that~$r$ is also a reticulation at the top, a contradiction. In the second scenario,~$q$ must be a reticulation at the top and we again obtain a contradiction, by the same reasoning.
Therefore, we conclude that~$u=q=y$. This means that there is a triangle $(y,r)$, $(y,p)$, $(p,r)$ and we can apply Lemma~\ref{lem:TriangleToTop} to either increase the number of reticulations at the top by~$1$ in at most~$4$ rNNI moves, or reduce the number of nodes above~$r$ by~$1$ in~$2$ rNNI moves.
\medskip

\leonew{It remains to bound the number of moves. We say that a node is a \emph{base node} if it is \markj{an internal} node of the base tree obtained by deleting all arcs $(u,v)$ with~$t(u)=t(v)$ and suppressing indegree-1 outdegree-1 nodes. Hence, \markj{an internal} node is a base node precisely if it has no parent or child with the same label in the HGT-consistent labelling~$t$. Define~$a_b(v)$ as the number of base nodes above~$v$. Hence, a reticulation~$r$ is at the top if and only if \markj{$a_b(r)=2$}. Since a rooted phylogenetic tree with~$n$ leaves has~\markj{$n$} internal nodes, the network~$N$ has~\markj{$n$} base nodes.}

\leonew{In the proof above, we consider a reticulation~$r$ not at the top minimizing~$t(r)$. Consequently, all nodes above~$r$ are base nodes or endpoints of horizontal arcs at the top. If~$r$ is not part of a triangle, we reduce the number of nodes above~$r$, and hence~$a_b(r)$, by~$1$ in one rNNI move. If~$r$ is part of a triangle, we achieve the same in~$2$ rNNI moves, or move~$r$ to the top in at most~$4$ rNNI moves. We end with~$a_b(r)=\markj{2}$. Hence, we need at most~$2$ rNNI moves per base node, except the \markj{root and its child}, plus at most~$4$ additional moves. The total number of rNNI moves required to move~$r$ to the top is at most $2(n-2) + 4 = 2n$.}


\begin{figure}
    \centering
    \begin{tikzpicture}
	 \tikzset{edge/.style={thick}}
     \tikzset{arc/.style={-{Latex[length=2mm]},thick}}
     \tikzstyle{every node}=[font=\footnotesize]
	 \begin{scope}[xshift=0cm,yshift=0cm,xscale=.5,yscale=.5]
	\draw[thick, fill, radius=0.06] (0,0) circle node[above] {$q$};
    \draw[thick, fill, radius=0.06] (2,0) circle node[above] {$s$};
    \draw[thick, fill, radius=0.06] (0,-2) circle node[left] {$p$};
    \draw[edge] (0,0) -- (0,-2);
    \draw[thick, fill, radius=0.06] (2,-1) circle node[right] {$u$};
    \draw[thick, fill, radius=0.06] (3,-3) circle node[below] {$v$};
    \draw[edge] (2,-1) -- (3,-3);
    \draw[edge] (2,0) -- (2,-1);
    \draw[thick, fill, radius=0.06] (1,-2) circle node[right] {$r$};
    \draw[edge] (2,-1) -- (1,-2);
    \draw[arc] (0,-2) -- (1,-2);
    \draw[thick, fill, radius=0.06] (1,-3) circle node[below] {$c$};
    \draw[edge] (1,-2) -- (1,-3);
    \draw[edge] (0,-2) -- (0,-3);
	\draw[arc] (3.4,-2) -- (8,-2);
	\draw (5.5,-2) node[above,scale=.8] {$(u,r,c)\move{(p,r)}(s,u)$};
	\draw (5.5,-4.5) node {(a)};
	\end{scope}
	\begin{scope}[xshift=4.5cm,yshift=0cm,xscale=.5,yscale=.5]
	\draw[thick, fill, radius=0.06] (0,0) circle node[above] {$q$};
    \draw[thick, fill, radius=0.06] (2,0) circle node[above] {$s$};
    \draw[thick, fill, radius=0.06] (0,-1) circle node[left] {$p$};
    \draw[edge] (0,0) -- (0,-2);
    \draw[thick, fill, radius=0.06] (2,-1.5) circle node[right] {$u$};
    \draw[thick, fill, radius=0.06] (3,-3) circle node[below] {$v$};
    \draw[edge] (2,-1.5) -- (3,-3);
    \draw[edge] (2,0) -- (2,-1.5);
    \draw[thick, fill, radius=0.06] (2,-1) circle node[right] {$r$};
    \draw[edge] (2,-1.5) -- (1,-3);
    \draw[arc] (0,-1) -- (2,-1);
    \draw[thick, fill, radius=0.06] (1,-3) circle node[below] {$c$};
    \draw[edge] (0,-2) -- (0,-3);
	\end{scope}
	\begin{scope}[xshift=8cm,yshift=0cm,xscale=.5,yscale=.5]
	\draw[thick, fill, radius=0.06] (0,-1) circle node[left] {$q$};
    \draw[thick, fill, radius=0.06] (2,0) circle node[right] {$u$};
    \draw[edge] (2,1) -- (2,0);
    \draw[thick, fill, radius=0.06] (1,0) circle node[left] {$y$};
    \draw[edge,dotted] (1,0) -- (0,-1);
    \draw[edge] (1,1) -- (1,0);
    \draw[arc] (1,0) -- (2,0);
    \draw[thick, fill, radius=0.06] (0,-2) circle node[left] {$p$};
    \draw[edge] (0,-1) -- (0,-2);
    \draw[thick, fill, radius=0.06] (1,-2) circle node[right] {$r$};
    \draw[edge] (2,0) -- (1,-2);
    \draw[arc] (0,-2) -- (1,-2);
    \draw[thick, fill, radius=0.06] (1,-3) circle node[below] {$c$};
    \draw[edge] (1,-2) -- (1,-3);
    \draw[edge] (0,-2) -- (0,-3);
	\draw (1,-4.5) node {(b)};
	\end{scope}
	\end{tikzpicture}
    \caption{Illustration of the proof of \Cref{lem:ReticToTop} for the case~$t(u)>t(q)$.
     (a) When~$u$ is a tree node, the head of the reticulation arc~$(p,r)$ is moved up.
     (b) When~$u$ is a reticulation at the top, an LCA~$y$ of~$u$ and~$q$ is a parent of~$u$ with~$t(y)=t(u)$, leading to a contradiction to the assumption $t(u)>t(q)$.
    }
    \label{fig:MoveReticUp}
\end{figure}
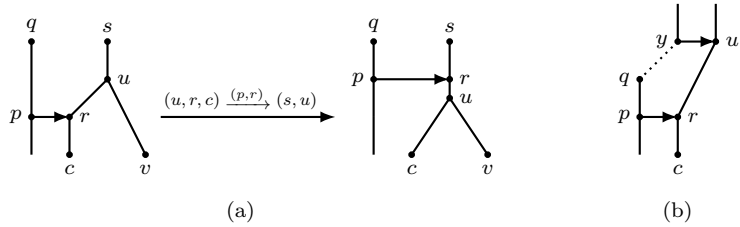
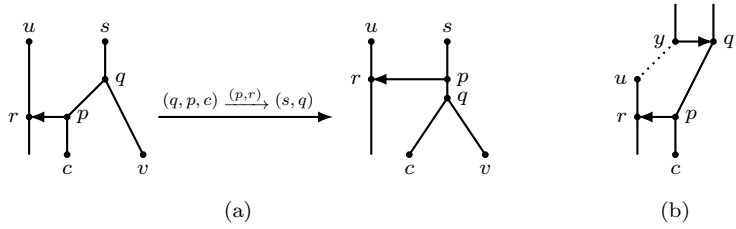
\begin{figure}
    \centering
    \begin{tikzpicture}
	 \tikzset{edge/.style={thick}}
     \tikzset{arc/.style={-{Latex[length=2mm]},thick}}
     \tikzstyle{every node}=[font=\footnotesize]
	 \begin{scope}[xshift=0cm,yshift=0cm,xscale=.5,yscale=.5]
	\draw[thick, fill, radius=0.06] (0,0) circle node[above] {$u$};
    \draw[thick, fill, radius=0.06] (2,0) circle node[above] {$s$};
    \draw[thick, fill, radius=0.06] (0,-2) circle node[left] {$r$};
    \draw[edge] (0,0) -- (0,-2);
    \draw[thick, fill, radius=0.06] (2,-1) circle node[right] {$q$};
    \draw[thick, fill, radius=0.06] (3,-3) circle node[below] {$v$};
    \draw[edge] (2,-1) -- (3,-3);
    \draw[edge] (2,0) -- (2,-1);
    \draw[thick, fill, radius=0.06] (1,-2) circle node[right] {$p$};
    \draw[edge] (2,-1) -- (1,-2);
    \draw[arc] (1,-2) -- (0,-2);
    \draw[thick, fill, radius=0.06] (1,-3) circle node[below] {$c$};
    \draw[edge] (1,-2) -- (1,-3);
    \draw[edge] (0,-2) -- (0,-3);
	\draw[arc] (3.4,-2) -- (8,-2);
	\draw (5.5,-2) node[above,scale=.8] {$(q,p,c)\move{(p,r)}(s,q)$};
	\draw (5.5,-4.5) node {(a)};
	\end{scope}
	\begin{scope}[xshift=4.5cm,yshift=0cm,xscale=.5,yscale=.5]
	\draw[thick, fill, radius=0.06] (0,0) circle node[above] {$u$};
    \draw[thick, fill, radius=0.06] (2,0) circle node[above] {$s$};
    \draw[thick, fill, radius=0.06] (0,-1) circle node[left] {$r$};
    \draw[edge] (0,0) -- (0,-2);
    \draw[thick, fill, radius=0.06] (2,-1.5) circle node[right] {$q$};
    \draw[thick, fill, radius=0.06] (3,-3) circle node[below] {$v$};
    \draw[edge] (2,-1.5) -- (3,-3);
    \draw[edge] (2,0) -- (2,-1.5);
    \draw[thick, fill, radius=0.06] (2,-1) circle node[right] {$p$};
    \draw[edge] (2,-1.5) -- (1,-3);
    \draw[arc] (2,-1) -- (0,-1);
    \draw[thick, fill, radius=0.06] (1,-3) circle node[below] {$c$};
    \draw[edge] (0,-2) -- (0,-3);
	\end{scope}
	\begin{scope}[xshift=8cm,yshift=0cm,xscale=.5,yscale=.5]
	\draw[thick, fill, radius=0.06] (0,-1) circle node[left] {$u$};
    \draw[thick, fill, radius=0.06] (2,0) circle node[right] {$q$};
    \draw[edge] (2,1) -- (2,0);
    \draw[thick, fill, radius=0.06] (1,0) circle node[left] {$y$};
    \draw[edge,dotted] (1,0) -- (0,-1);
    \draw[edge] (1,1) -- (1,0);
    \draw[arc] (1,0) -- (2,0);
    \draw[thick, fill, radius=0.06] (0,-2) circle node[left] {$r$};
    \draw[edge] (0,-1) -- (0,-2);
    \draw[thick, fill, radius=0.06] (1,-2) circle node[right] {$p$};
    \draw[edge] (2,0) -- (1,-2);
    \draw[arc] (1,-2) -- (0,-2);
    \draw[thick, fill, radius=0.06] (1,-3) circle node[below] {$c$};
    \draw[edge] (1,-2) -- (1,-3);
    \draw[edge] (0,-2) -- (0,-3);
	\draw (1,-4.5) node {(b)};
	\end{scope}
	\end{tikzpicture}
    \caption{\leonew{Illustration of the proof of \Cref{lem:ReticToTop} for the case~$t(q)>t(u)$.
     (a) When~$q$ is a tree node, the tail of the reticulation arc~$(p,r)$ is moved up.
     (b) When~$q$ is a reticulation at the top, an LCA~$y$ of~$u$ and~$q$ is a parent of~$q$ with~$t(y)=t(q)$, leading to a contradiction to the assumption $t(q)>t(u)$.}
    }
    \label{fig:MoveReticUp-b}
\end{figure}
\end{proof}

\leonew{The next step is to move all but one of the leaves to one side of the network, see Figure~\ref{fig:canonicalNetworkConnectedness}.}

\begin{lemma}\label{lem:PendantTrees}
Let $N$ be a binary network on~$n$ leaves with~$r(N)$ reticulations at the top. Let~$\leonew{l}$ be a leaf below the \leonew{head} of the lowest horizontal arc at the top. Then, using at most $2n-4$ rNNI moves, $N$ can be transformed into a network with $r(N)$ reticulations at the top, where~$l$ is the only leaf below the \leonew{head} of the lowest horizontal arc at the top.
\end{lemma}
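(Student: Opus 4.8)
The plan is to prove the lemma by repeatedly relocating pendant subtrees from below the head $y_k$ of the lowest horizontal arc over to the tail side, never touching the reticulation ladder, so that the result stays an orchard network with $r(N)$ reticulations at the top. First I would fix notation: write $x_k$ and $y_k$ for the tail and head of the lowest horizontal arc $(x_k,y_k)$, let $c_t$ be the child of $x_k$ other than $y_k$, and let $c_h$ be the child of $y_k$. Since $N$ has $r(N)$ reticulations at the top, every reticulation lies on the ladder, so each ladder node already has both of its children determined by the ladder and no pendant subtree hangs off an intermediate ladder node; hence every leaf lies in the pendant tree $T_h$ rooted at $c_h$ (``below the head'') or in the pendant tree $T_t$ rooted at $c_t$. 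As $T_t$ is a nonempty pendant tree, the number of leaves below $y_k$, which I denote $|T_h|$, satisfies $|T_h|\le n-1$. By hypothesis $l\in T_h$, and the goal is to reduce $T_h$ to the single leaf $l$.

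The core step relocates one off-path subtree with two rNNI moves. Consider the path in $T_h$ from $c_h$ to $l$, let $d$ be the child of $c_h$ on the side containing $l$, and let $s$ be the root of the other child's subtree $S$. I would first apply the rNNI move $(y_k,c_h,d)\move{(c_h,s)}(x_k,y_k)$, which splices $c_h$ (carrying all of $S$ rigidly along the retained arc $(c_h,s)$) into the horizontal arc $(x_k,y_k)$; this is a legal rNNI move since the new arc's endpoints share the vertex $y_k$ with the neighbourhood $\{y_k,d\}$ of $c_h$. Afterwards $y_k$ has sole child $d$, but the lowest horizontal arc has been subdivided by the tree node $c_h$. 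I would then apply $(x_k,c_h,y_k)\move{(c_h,s)}(x_k,c_t)$, which slides $c_h$ (still carrying $S$) onto the arc $(x_k,c_t)$, again legal since the two arcs share $x_k$. The net effect is that the arc $(x_k,y_k)$ is restored, $S$ now hangs on the tail side (as a sibling of $T_t$ under $c_h$), and the pendant tree below $y_k$ is exactly the side rooted at $d$; there is still no arc between the child of $y_k$ and the child of $x_k$, so the network again has precisely $r(N)$ reticulations at the top. Because $S$ travels along the single retained arc, this pair of moves costs $2$ no matter how many leaves $S$ contains, and the depth of $l$ below $y_k$ drops by one.

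To justify validity I would invoke \Cref{lem:OrchardValid}: it suffices to exhibit an HGT-consistent labelling of each intermediate graph, since then the move is automatically valid and the result is orchard by \Cref{thm:OrchIFFHGT}. For the first move one changes the labelling $t$ only at $c_h$, placing it just above $x_k$ so that the subdivided arc $(c_h,y_k)$ stays horizontal (equal labels, permissible as $y_k$ has indegree~$2$), while $(x_k,c_h)$ strictly increases and $c_h$ has the child $s$ with a strictly larger label; for the second move one restores $t$ on the ladder and relabels $c_h$ together with $S$ to sit strictly below $x_k$ on the tail side. Both labellings satisfy Properties~1--3, and since the higher rungs of the ladder are never touched and the bottom rung is restored, the ``reticulations at the top'' structure is genuinely preserved throughout.

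Finally I would iterate and count. Each application of the core step replaces the head-subtree root by its child toward $l$, so after a number of iterations equal to the depth of $l$ in the original $T_h$ the pendant tree below $y_k$ is the single leaf $l$ (when $|T_h|=1$ no moves are needed). Since in a binary tree on $|T_h|$ leaves every root-to-leaf path has at most $|T_h|-1$ internal nodes, the number of iterations is at most $|T_h|-1\le n-2$, giving at most $2(n-2)=2n-4$ rNNI moves, as claimed. I expect the main obstacle to be the careful bookkeeping of the two-move relocation: one must verify in full that each of the two moves is a genuine valid rNNI move whose result is still orchard with the ladder intact, handle the fact that the head/tail orientation of the lowest horizontal arc and the side of $c_h$ containing $l$ may both vary, and confirm that the off-path subtree really does travel rigidly so that the per-iteration cost stays exactly $2$.
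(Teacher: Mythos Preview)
Your approach is essentially identical to the paper's: both perform exactly the same pair of rNNI moves at each step (splicing the current child of $y_k$ onto the arc $(x_k,y_k)$ and then onto the tail-side arc), iterate along the $y_k$-to-$l$ path, and bound the number of iterations by $n-2$. One small correction in your labelling argument: after the first move you cannot change $t$ \emph{only} at $c_h$, since you need both $t'(x_k)<t'(c_h)$ and $t'(c_h)=t'(y_k)$ while originally $t(x_k)=t(y_k)$; the paper resolves this by also raising the label of $y_k$ to $t(x_k)+\epsilon$ (and then restoring it after the second move).
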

\begin{proof}
Let $(x,y)$ be the lowest horizontal arc at the top.
Let~\leonew{$u_0$} be the child of~$x$ other than~$y$.
Let $y,u_1,\ldots,u_m=l$ be the unique directed path from $y$ to $l$. Let~$v_i$ be the child of~$u_i$ other than~$u_{i+1}$ or~$l$, for $i=1,\ldots ,\leonew{m-1}$. We apply the following sequence of rNNI moves: $(y,u_i,u_{i+1})\move{(u_i,v_i)}(x,y)$ and $(x,u_i,y)\move{(u_i,v_i)}(x,\leonew{u_{i-1}})$ for all $i=1,\ldots,\leonew{m-1}$. \leonew{See Figure~\ref{fig:movetootherside}.} 
\leonew{
It can easily be checked that the following maps~$t_1^{(i)}$ and~$t_2^{(i)}$ are HGT-consistent labellings of the graphs obtained after the first and, respectively, second rNNI move, for~$i=1,\ldots,m-1$. Letting~$t=t_2^{(0)}$ denote an HGT-labelling for~$N$, we have
\[t_1^{(i)}(z)=\begin{cases}
t(x) + \epsilon &\mbox{ if } z\in\{u_i,y\};\\
t_2^{(i-1)}(z) &\mbox{ otherwise,} 
\end{cases}\]
and
\[t_2^{(i)}(z)=\begin{cases}
t(x) &\mbox{ if } z=y;\\
\min\{t(u_0),t(u_1)\} - i\epsilon &\mbox{ if } z=u_i;\\
t_1^{(i)}(z) &\mbox{ otherwise,} 
\end{cases}\]
where~$\epsilon>0$ is very small. Hence, all intermediate graphs, as well as the final one, are orchard networks. The final network \leonew{(see Figure~\ref{fig:canonicalNetworkConnectedness})} still has $r(N)$ reticulations at the top, and~$l$ is the child of~$y$.
}
Moreover, as $m\leq n-2$, the sequence of moves contains at most $2(n-2)=2n-4$ rNNI moves.
\end{proof}

\begin{figure}
    \centering
    \begin{tikzpicture}
	 \tikzset{edge/.style={thick}}
     \tikzset{arc/.style={-{Latex[length=2mm]},thick}}
     \tikzstyle{every node}=[font=\footnotesize]
	\begin{scope}[xshift=0cm,yshift=0cm,xscale=.5,yscale=.5]
	\draw[thick, fill, radius=0.06] (0,.5) circle;
	\draw[thick, fill, radius=0.06] (0,0) circle;
	\draw[edge] (0,.5) -- (0,0);
	\draw[thick, fill, radius=0.06] (-2.5,-2.5) circle node[below] {$l=u_m$};
	\draw[thick, fill, radius=0.06] (2.5,-2.5) circle;
	\draw[edge] (0,0) -- (-1,-1);
	\draw[edge,dashed] (-1,-1) -- (-2,-2);
	\draw[edge] (-2,-2) -- (-2.5,-2.5);
	\draw[edge] (0,0) -- (1,-1);
	\draw[edge,dashed] (1,-1) -- (2,-2);
	\draw[edge] (2,-2) -- (2.5,-2.5);
    \draw[edge,dashed] (2.5,-2.5) -- (4,-4);	
	\draw[edge] (4,-4) -- (5.5,-5.5);
	
	\draw[thick, fill, radius=0.06] (-.5,-.5) circle;
	\draw[thick, fill, radius=0.06] (-1,-1) circle;
	\draw[thick, fill, radius=0.06] (-2,-2) circle node[left] {$y$};
	\draw[thick, fill, radius=0.06] (.5,-.5) circle;
	\draw[thick, fill, radius=0.06] (1,-1) circle;
	\draw[thick, fill, radius=0.06] (2,-2) circle node[right] {$x$};
	\draw[arc] (.5,-.5) -- (-.5,-.5);
	\draw[arc] (1,-1) -- (-1,-1);
	\draw[arc] (2,-2) -- (-2,-2);
	
	\draw[thick, fill, radius=0.06] (5.5,-5.5) circle node[right] {$u_0$};
	\draw[edge,thin] (5.5,-5.5) -- (4.5,-6.5);
	\draw[edge,thin] (4.5,-6.5) -- (6.5,-6.5);
	\draw[edge,thin] (5.5,-5.5) -- (6.5,-6.5);
	
	\draw[thick, fill, radius=0.06] (4,-4) circle node[right] {$u_{1}$};
	\draw[edge] (4,-4) -- (3.5,-4.5);
	\draw[thick, fill, radius=0.06] (3.5,-4.5) circle node[left] {$v_1$};
    \draw[edge,thin] (3.5,-4.5) -- (2.5,-5.5);
	\draw[edge,thin] (2.5,-5.5) -- (4.5,-5.5);
	\draw[edge,thin] (3.5,-4.5) -- (4.5,-5.5);
	
	\draw[thick, fill, radius=0.06] (2.5,-2.5) circle node[right] {$u_{m-1}$};
	\draw[edge] (2.5,-2.5) -- (2,-3);
	\draw[thick, fill, radius=0.06] (2,-3) circle node[left] {$v_{m-1}$};
	\draw[edge,thin] (2,-3) -- (1,-4);
	\draw[edge,thin] (1,-4) -- (3,-4);
	\draw[edge,thin] (2,-3) -- (3,-4);
	\end{scope}
	\end{tikzpicture}
    \caption{\leonew{The network used in the proof of Lemma~\ref{lem:PendantTrees} and Theorem~\ref{the:orchardConnected}, in which the triangles below $u_0,v_1,\ldots ,v_{m-1}$ indicate trees rooted at those nodes.}
    }
    \label{fig:canonicalNetworkConnectedness}
\end{figure}
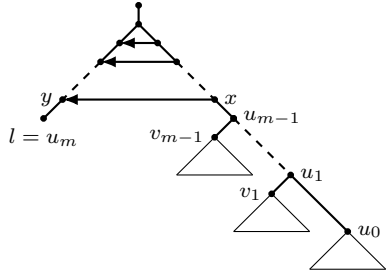

\begin{figure}
    \centering
    \begin{tikzpicture}
	 \tikzset{edge/.style={thick}}
     \tikzset{arc/.style={-{Latex[length=2mm]},thick}}
     \tikzstyle{every node}=[font=\footnotesize]
     \begin{scope}[xshift=0cm,yshift=0cm,xscale=.5,yscale=.5]
	\draw[edge] (0,.5) -- (0,0);
	\draw[thick, fill, radius=0.06] (0,0) circle node[left] {$y$};
	\draw[edge] (2,.5) -- (2,0);
	\draw[thick, fill, radius=0.06] (2,0) circle node[right] {$x$};
	\draw[arc] (2,0) -- (0,0);
	\draw[edge] (0,0) -- (0,-2);
	\draw[thick, fill, radius=0.06] (0,-2) circle node[left] {$u_i$};
	\draw[edge] (0,-2) -- (0,-3);
	\draw[thick, fill, radius=0.06] (0,-3) circle node[below] {$u_{i+1}$};
	\draw[edge] (0,-2) -- (1,-3);
	\draw[thick, fill, radius=0.06] (1,-3) circle node[below] {$v_{i}$};
	\draw[edge] (2,0) -- (2,-1.7);
	\draw[thick, fill, radius=0.06] (2,-1.7) circle node[right] {$u_{i-1}$};
	\draw[arc] (2.3,-4) -- (8,-4);
	\draw (5,-4) node[above,scale=.8] {$(y,u_i,u_{i+1})\move{(u_i,v_i)}(x,y)$};
	\end{scope}
	 \begin{scope}[xshift=4.5cm,yshift=0cm,xscale=.5,yscale=.5]
	\draw[edge] (0,.5) -- (0,-.3);
	\draw[thick, fill, radius=0.06] (0,-.3) circle node[left] {$y$};
	\draw[edge] (2,.5) -- (2,0);
	\draw[thick, fill, radius=0.06] (2,0) circle node[right] {$x$};
	\draw[arc] (1,-.3) -- (0,-.3);
	\draw[edge] (0,0) -- (0,-2);
	\draw[edge] (2,0) -- (1,-.3);
	\draw[thick, fill, radius=0.06] (1,-.3) circle node[above] {$u_i$};
	\draw[edge] (0,-2) -- (0,-3);
	\draw[thick, fill, radius=0.06] (0,-3) circle node[below] {$u_{i+1}$};
	\draw[edge] (1,-.3) -- (1,-3);
	\draw[thick, fill, radius=0.06] (1,-3) circle node[below] {$v_{i}$};
	\draw[edge] (2,0) -- (2,-1.7);
	\draw[thick, fill, radius=0.06] (2,-1.7) circle node[right] {$u_{i-1}$};
	\draw[arc] (2.3,-4) -- (8,-4);
	\draw (5,-4) node[above,scale=.8] {$(x,u_i,y)\move{(u_i,v_i)}(x,u_{i-1})$};
	\end{scope}
	\begin{scope}[xshift=9cm,yshift=0cm,xscale=.5,yscale=.5]
	\draw[edge] (0,.5) -- (0,-.3);
	\draw[thick, fill, radius=0.06] (0,0) circle node[left] {$y$};
	\draw[edge] (2,.5) -- (2,0);
	\draw[thick, fill, radius=0.06] (2,0) circle node[right] {$x$};
	\draw[arc] (2,0) -- (0,0);
	\draw[edge] (0,0) -- (0,-2);
	\draw[thick, fill, radius=0.06] (2,-1.4) circle node[right] {$u_i$};
	\draw[edge] (0,-2) -- (0,-3);
	\draw[thick, fill, radius=0.06] (0,-3) circle node[below] {$u_{i+1}$};
	\draw[edge] (2,-1.4) -- (1,-3);
	\draw[thick, fill, radius=0.06] (1,-3) circle node[below] {$v_{i}$};
	\draw[edge] (2,0) -- (2,-1.7);
	\draw[thick, fill, radius=0.06] (2,-1.7) circle node[right] {$u_{i-1}$};
	\end{scope}
	\end{tikzpicture}
    \caption{Illustration of the rNNI moves in the proof of Lemma~\ref{lem:PendantTrees}.
    }
    \label{fig:movetootherside}
\end{figure}
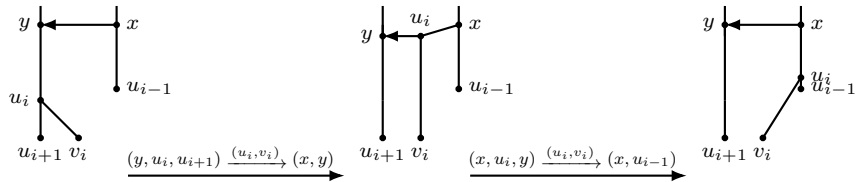

We are now ready to prove that the space of orchard networks is connected by rNNI moves, for a fixed number of reticulations and set of taxa. We basically prove this theorem by showing that each orchard network can be transformed into one particular network (see Figure~\ref{fig:canonicalNetworkConnectedness}).
This shows that, in the space of orchard networks with the same number of leaves and reticulations, there is a path from each node to one given node, and, thus, the space is connected.

\begin{theorem}\label{the:orchardConnected}
The space of binary orchard networks with $n$ leaves and $k$ reticulations is connected under rNNI moves with diameter at most \leonew{$4kn+n\ceil{\log_2(n)}+2k+6n-8$ which is~$O(kn+n\log(n))$}. 
\end{theorem}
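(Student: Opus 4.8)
The plan is to show that every binary orchard network with $n$ leaves and $k$ reticulations can be brought, by a short sequence of rNNI moves, into a common ``semi-canonical'' shape in which all $k$ reticulations are neatly at the top and a single leaf is pinned directly below the head of the lowest horizontal arc, with the remaining leaves forming one pendant tree. Since rNNI moves are symmetric, it then suffices to bound (i) the number of moves taking an arbitrary $N$ to this shape, and (ii) the number of moves needed to rearrange the pendant tree; to connect two networks $N_1$ and $N_2$ I would route $N_1 \to \hat N_1 \to \hat N_2 \to N_2$, where $\hat N_1$ and $\hat N_2$ agree on their reticulation-and-leaf structure and differ only in the pendant tree. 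Throughout, I would never check validity of an intermediate graph directly: each is obtained by an rSPR/rNNI move and is equipped with an explicit HGT-consistent labelling, so \Cref{lem:OrchardValid} together with \Cref{thm:OrchIFFHGT} guarantees it is again a binary orchard network.

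First I would stack the reticulations. Beginning from $0$ reticulations at the top, apply \Cref{lem:ReticToTop} exactly $k$ times; each application costs at most $2n$ moves and raises the count of reticulations at the top by one, for a total of at most $2nk$ moves. I would then apply \Cref{lem:ReticsNeatlyTop} to reorient the horizontal arcs so that all reticulations lie \emph{neatly} at the top. Since that lemma reorients the highest $k'$ horizontal arcs in a single move, realizing the target orientation reduces to expressing the required flip pattern as a sum of ``prefix reorientations''; the number of moves needed equals the number of level-to-level changes in that pattern, which is at most $k$.

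Next I would isolate a leaf. Choosing a leaf $l$ below the head of the lowest horizontal arc, \Cref{lem:PendantTrees} makes $l$ the unique such leaf in at most $2n-4$ moves, consolidating the other leaves into a single pendant tree. At this point each $\hat N$ is determined by its pendant tree, so the remaining task is to connect two such trees. With the reticulations pinned neatly at the top, rNNI moves acting below them coincide with rooted NNI moves on a rooted binary tree, and each preserves the displayed HGT-consistent labelling; hence I would reduce the problem to transforming one rooted binary tree on at most $n$ leaves into another. For this I would invoke (or prove directly) the fact that any rooted binary tree can be transformed into a fixed balanced canonical tree, so that any two such trees are within $n\ceil{\log_2 n}+2n$ moves of each other. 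Summing, $N_1\to\hat N_1$ and $\hat N_2\to N_2$ each cost at most $2nk+k+2n-4$ moves, and the tree step at most $n\ceil{\log_2 n}+2n$, giving the total $2(2nk+k+2n-4)+n\ceil{\log_2 n}+2n = 4kn+n\ceil{\log_2 n}+2k+6n-8$.

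The hard part will be the tree-canonicalization step, since it is the only ingredient not already packaged as a lemma above. The logarithmic factor is the crux: the natural route is a divide-and-conquer (or merge-style) canonicalization to a balanced tree, where each of the $\ceil{\log_2 n}$ levels is repaired in $O(n)$ moves, and one must check both that the constant comes out right and that each tree move lifts to a valid rNNI move on the network preserving HGT-consistency. The other potential pitfall is purely bookkeeping: one must make sure the two semi-canonical forms $\hat N_1,\hat N_2$ really do share the same pinned leaf and reticulation structure, so that the tree step alone suffices to reconcile them --- but this is absorbed into the tree-canonicalization, since the pinned leaf is just one leaf of the tree being canonicalized. By contrast, the reticulation-stacking and leaf-isolation steps are routine, each reducing to verifying that the piecewise labellings exhibited in \Cref{lem:ReticToTop} and \Cref{lem:PendantTrees} satisfy Properties~1--3.
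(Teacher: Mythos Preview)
Your proposal follows essentially the same route as the paper: stack the reticulations (\Cref{lem:ReticToTop}, cost $2kn$), reorient to the neat configuration (\Cref{lem:ReticsNeatlyTop}, cost $\leq k$), isolate a single leaf below the lowest reticulation (\Cref{lem:PendantTrees}, cost $\leq 2n-4$), and then appeal to the rNNI diameter bound for rooted binary trees on $n-1$ leaves ($\leq n\ceil{\log_2 n}+2n$, which the paper simply cites from \cite{li1996nearest,erdHos2021rooted} rather than proving). The arithmetic and the use of \Cref{lem:OrchardValid} plus \Cref{thm:OrchIFFHGT} to certify intermediate networks match the paper exactly.

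One small correction to your bookkeeping remark: the pinned leaf $l$ is \emph{not} a leaf of the pendant tree being canonicalized---it is the one leaf outside it---so if $\hat N_1$ and $\hat N_2$ pin different leaves, their pendant trees lie on different label sets and the tree-diameter bound does not apply directly. The paper handles this by fixing $l$ at the outset and using the reorientation step (\Cref{lem:ReticsNeatlyTop}) to force $l$ below the head of the lowest horizontal arc in \emph{both} networks; this costs nothing extra and resolves the issue cleanly.
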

\begin{proof}
Consider any two binary orchard networks~$N_1,N_2$ with~$k$ reticulations on the same set of~$n$ taxa. \markj{Fix an arbitrary leaf $l$.} In each of the two networks, we do the following. First move all reticulations to the top using the moves in Lemma~\ref{lem:ReticToTop}. For each reticulation, this takes at most $2n$ moves, so at most $2kn$ rNNI moves in total. Using at most $k$ rNNI moves, the horizontal arcs at the top can be reoriented such that the network has~$k$ reticulations neatly at the top, \markj{and such that $l$ is below the \leonew{head} of the lowest horizontal arc \leonew{at the top},}
by Lemma~\ref{lem:ReticsNeatlyTop}.
Using at most $2n-4$ moves, we can move all leaves except~$l$ below the tail~$x$ of the lowest horizontal arc at the top, by Lemma~\ref{lem:PendantTrees}. 
This is done on both~$N_1$ and~$N_2$, so the contribution towards the number of moves up until this point is~$2(2kn+k+2n-4)$.

Let~$\diam_{\rNNI}(n,k)$ denote the diameter of~$\Orch(n,k)$. Let~$T_i$ denote the subtree rooted at the child of~$x$ \leonew{other than~$y$ in~$N_i$ (i.e. the subtree rooted at~$u_{m-1}$ in Figure~\ref{fig:canonicalNetworkConnectedness}).} Note that~$T_i$ contains all leaves except~$l$. We can change~$T_1$ into~$T_2$ using at most $\diam_{\rNNI}(n-1,0)\leq 2n+n\ceil{\log_2(n)}$ moves \cite{li1996nearest,erdHos2021rooted}.
It follows that
\leonew{$\diam_{\rNNI}(n,k)\leq 2(2kn+k+2n-4)+\diam_{\rNNI}(n-1,0)\leq 4kn+n\ceil{\log_2(n)}+2k+6n-8=O(kn+n\log(n))$ moves.}
\end{proof}

\section{Discussion}
In this paper, we have shown that binary orchard networks can be characterized as networks with an HGT-consistent labelling, meaning that they can be obtained from a tree by inserting horizontal arcs. Hence, \leonew{this} class of networks, which was introduced for its computational benefits, also has a biological interpretation.

\leonew{This does not mean that orchard networks can only be applied in situations where reticulations represent HGT events. Although orchard networks can be drawn as HGT networks, they can also be drawn differently. Hence, orchard networks may still be useful in applications where reticulations represent hybridizations or other reticulate events. Restricting to orchard networks may exclude some scenarios in that case, but the nice mathematical properties may outweigh that, especially when they can be exploited to develop efficient algorithms.}

We have also shown that non-binary orchard networks can be characterized using this time labelling \leonew{on}
a binary refinement. \leonew{However, this characterization is less satisfying as it does not specify which binary refinement to use. We leave it as an open question to find a characterization that uses a time-labelling directly on the nonbinary network.}


To show the mathematical utility of the new characterization, we have used this new characterization to prove that the space of orchard networks is connected under rNNI moves. As mentioned, this may prove important, because some statistical network generators introduce reticulations as HGT events \cite{pons2019generation}, which naturally leads to orchard networks. Hence, if such generators are used as a prior in a Bayesian method for network inference, the prior probability of all non-orchard networks will be zero, so it is important to know that the space of orchard networks is connected.

To see whether it makes sense computationally to restrict to orchard networks, it would also be interesting to know whether our upper bound on the diameter of the space of orchard networks of order $O(kn+n\log n)$ is asymptotically tight. 
\yuki{Indeed, smaller diameters can be favourable when deciding on a network space to search through, as it could mean a shorter mixing time for Markov Chain Monte Carlo methods~\cite{klawitter2020spaces}.}
The bound is, in any case, close to the bound for tree-based networks ($O(kn+k^2+n\log n)$ \cite{erdHos2021rooted}). It is clear the asymptotic bound cannot get smaller than $O(n\log n)$, as this is tight for trees, but it may be possible to remove or reduce the $kn$ part.

Note that our results are only given for rNNI moves, and not for local tail or local head moves separately, as is done for tree-based networks in \cite{erdHos2021rooted}. It might be true that the space of orchard networks is also connected under distance-1 tail moves, or distance-2 head moves, but our proof \leonew{does not
imply} this, as we have used a mix of distance-1 tail moves and distance-1 head moves. 
It would be of interest to investigate this further.

\bibliographystyle{alpha}
\bibliography{bibliography}

\newcommand{\etalchar}[1]{$^{#1}$}
\begin{thebibliography}{BVBS{\etalchar{+}}19}

\bibitem[BA21]{blais2021past}
C{\'e}dric Blais and John~M Archibald.
\newblock The past, present and future of the tree of life.
\newblock {\em Current Biology}, 31(7):R314--R321, 2021.

\bibitem[BESS21]{bai2021defining}
Allan Bai, P{\'e}ter~L Erd{\H{o}}s, Charles Semple, and Mike Steel.
\newblock Defining phylogenetic networks using ancestral profiles.
\newblock {\em Mathematical Biosciences}, 332:108537, 2021.

\bibitem[BLS17]{bordewich2017lost}
Magnus Bordewich, Simone Linz, and Charles Semple.
\newblock Lost in space? generalising subtree prune and regraft to spaces of
  phylogenetic networks.
\newblock {\em Journal of theoretical biology}, 423:1--12, 2017.

\bibitem[BSS06]{baroni2006hybrids}
Mihaela Baroni, Charles Semple, and Mike Steel.
\newblock Hybrids in real time.
\newblock {\em Systematic biology}, 55(1):46--56, 2006.

\bibitem[BST18]{bordewich2018constructing}
Magnus Bordewich, Charles Semple, and Nihan Tokac.
\newblock Constructing tree-child networks from distance matrices.
\newblock {\em Algorithmica}, 80(8):2240--2259, 2018.

\bibitem[BVBS{\etalchar{+}}19]{bouckaert2019beast}
Remco Bouckaert, Timothy~G Vaughan, Jo{\"e}lle Barido-Sottani, Sebasti{\'a}n
  Duch{\^e}ne, Mathieu Fourment, Alexandra Gavryushkina, Joseph Heled, Graham
  Jones, Denise K{\"u}hnert, Nicola De~Maio, et~al.
\newblock Beast 2.5: An advanced software platform for bayesian evolutionary
  analysis.
\newblock {\em PLoS computational biology}, 15(4):e1006650, 2019.

\bibitem[BvIJ{\etalchar{+}}13]{bapteste2013networks}
Eric Bapteste, Leo van Iersel, Axel Janke, Scot Kelchner, Steven Kelk, James~O
  McInerney, David~A Morrison, Luay Nakhleh, Mike Steel, Leen Stougie, et~al.
\newblock Networks: expanding evolutionary thinking.
\newblock {\em Trends in Genetics}, 29(8):439--441, 2013.

\bibitem[BvIJK20]{borst2020new}
Sander Borst, Leo van Iersel, Mark Jones, and Steven Kelk.
\newblock New {FPT} algorithms for finding the temporal hybridization number
  for sets of phylogenetic trees.
\newblock {\em arXiv preprint arXiv:2007.13615}, 2020.

\bibitem[CLRV08]{cardona2008distance}
Gabriel Cardona, Merc{\`e} Llabr{\'e}s, Francesc Rossell{\'o}, and Gabriel
  Valiente.
\newblock A distance metric for a class of tree-sibling phylogenetic networks.
\newblock {\em Bioinformatics}, 24(13):1481--1488, 2008.

\bibitem[CPR15]{cardona2015reconstruction}
Gabriel Cardona, Joan~Carles Pons, and Francesc Rossell{\'o}.
\newblock A reconstruction problem for a class of phylogenetic networks with
  lateral gene transfers.
\newblock {\em Algorithms for Molecular Biology}, 10(1):1--15, 2015.

\bibitem[CRV08]{cardona2008comparison}
Gabriel Cardona, Francesc Rossell{\'o}, and Gabriel Valiente.
\newblock Comparison of tree-child phylogenetic networks.
\newblock {\em IEEE/ACM Transactions on Computational Biology and
  Bioinformatics}, 6(4):552--569, 2008.

\bibitem[EFM21]{erdHos2021rooted}
P{\'e}ter~L Erd{\H{o}}s, Andrew Francis, and Tam{\'a}s~R{\'o}bert Mezei.
\newblock Rooted nni moves and distance-1 tail moves on tree-based phylogenetic
  networks.
\newblock {\em Discrete Applied Mathematics}, 294:205--213, 2021.

\bibitem[ELR06]{erdem2005temporal}
Esra Erdem, Vladimir Lifschitz, and Don Ringe.
\newblock Temporal phylogenetic networks and logic programming.
\newblock {\em Theory and Practice of Logic Programming}, 6(5):539--558, 2006.

\bibitem[EOZN19]{elworth2019advances}
RA~Leo Elworth, Huw~A Ogilvie, Jiafan Zhu, and Luay Nakhleh.
\newblock Advances in computational methods for phylogenetic networks in the
  presence of hybridization.
\newblock In Tandy Warnow, editor, {\em Bioinformatics and Phylogenetics},
  pages 317--360. Springer, Cham, 2019.

\bibitem[ESS19]{erdHos2019class}
P{\'e}ter~L Erd{\H{o}}s, Charles Semple, and Mike Steel.
\newblock A class of phylogenetic networks reconstructable from ancestral
  profiles.
\newblock {\em Mathematical biosciences}, 313:33--40, 2019.

\bibitem[FGH{\etalchar{+}}20]{fischer2020classes}
Mareike Fischer, Michelle Galla, Lina Herbst, Yangjing Long, and Kristina
  Wicke.
\newblock Classes of tree-based networks.
\newblock {\em Visual computing for industry, biomedicine, and art},
  3(1):1--26, 2020.

\bibitem[FS15]{francis2015phylogenetic}
Andrew~R Francis and Mike Steel.
\newblock Which phylogenetic networks are merely trees with additional arcs?
\newblock {\em Systematic biology}, 64(5):768--777, 2015.

\bibitem[HLS13]{humphries2013complexity}
Peter~J Humphries, Simone Linz, and Charles Semple.
\newblock On the complexity of computing the temporal hybridization number for
  two phylogenies.
\newblock {\em Discrete Applied Mathematics}, 161(7-8):871--880, 2013.

\bibitem[HvIJ{\etalchar{+}}19]{huber2019rooting}
Katharina~T Huber, Leo van Iersel, Remie Janssen, Mark Jones, Vincent Moulton,
  Yukihiro Murakami, and Charles Semple.
\newblock Rooting for phylogenetic networks.
\newblock {\em arXiv preprint arXiv:1906.07430}, 2019.

\bibitem[Jan21]{janssen2021heading}
Remie Janssen.
\newblock Heading in the right direction? using head moves to traverse
  phylogenetic network space.
\newblock {\em Journal of Graph Algorithms and Applications}, 25:263--310,
  2021.

\bibitem[JM21]{janssen2021cherry}
Remie Janssen and Yukihiro Murakami.
\newblock On cherry-picking and network containment.
\newblock {\em Theoretical Computer Science}, 856:121--150, 2021.

\bibitem[Kla20]{klawitter2020spaces}
Jonathan Klawitter.
\newblock {\em Spaces of phylogenetic networks}.
\newblock PhD thesis, University of Auckland, 2020.

\bibitem[LTZ96]{li1996nearest}
Ming Li, John Tromp, and Louxin Zhang.
\newblock On the nearest neighbour interchange distance between evolutionary
  trees.
\newblock {\em Journal of Theoretical Biology}, 182(4):463--467, 1996.

\bibitem[MAVE19]{markin2019robinson}
Alexey Markin, Tavis~K Anderson, Venkata Sai Krishna~Teja Vadali, and Oliver
  Eulenstein.
\newblock Robinson-foulds reticulation networks.
\newblock In {\em Proceedings of the 10th ACM International Conference on
  Bioinformatics, Computational Biology and Health Informatics}, pages 77--86,
  2019.

\bibitem[MNW{\etalchar{+}}04]{moret2004phylogenetic}
Bernard~ME Moret, Luay Nakhleh, Tandy Warnow, C~Randal Linder, Anna Tholse,
  Anneke Padolina, Jerry Sun, and Ruth Timme.
\newblock Phylogenetic networks: modeling, reconstructibility, and accuracy.
\newblock {\em IEEE/ACM Transactions on Computational Biology and
  Bioinformatics}, 1(1):13--23, 2004.

\bibitem[NRW05]{nakhleh2005riata}
Luay Nakhleh, Derek Ruths, and Li-San Wang.
\newblock Riata-hgt: a fast and accurate heuristic for reconstructing
  horizontal gene transfer.
\newblock In {\em International Computing and Combinatorics Conference}, pages
  84--93. Springer, 2005.

\bibitem[PSC19]{pons2019generation}
Joan~Carles Pons, Celine Scornavacca, and Gabriel Cardona.
\newblock Generation of level-$k$ lgt networks.
\newblock {\em IEEE/ACM transactions on computational biology and
  bioinformatics}, 17(1):158--164, 2019.

\bibitem[vIJJ{\etalchar{+}}21a]{van2019practical}
Leo van Iersel, Remie Janssen, Mark Jones, Yukihiro Murakami, and Norbert Zeh.
\newblock A practical fixed-parameter algorithm for constructing tree-child
  networks from multiple binary trees.
\newblock {\em Algorithmica}, 2021+.
\newblock To appear. arXiv preprint arXiv:1907.08474 (2019).

\bibitem[vIJJ{\etalchar{+}}21b]{van2021unifying}
Leo van Iersel, Remie Janssen, Mark Jones, Yukihiro Murakami, and Norbert Zeh.
\newblock A unifying characterization of tree-based networks and orchard
  networks using cherry covers.
\newblock {\em Advances in Applied Mathematics}, 129:102222, 2021.

\bibitem[vIM14]{van2014trinets}
Leo van {Iersel} and Vincent Moulton.
\newblock Trinets encode tree-child and level-2 phylogenetic networks.
\newblock {\em Journal of mathematical biology}, 68(7):1707--1729, 2014.

\bibitem[WYZN18]{wen2018inferring}
Dingqiao Wen, Yun Yu, Jiafan Zhu, and Luay Nakhleh.
\newblock Inferring phylogenetic networks using phylonet.
\newblock {\em Systematic biology}, 67(4):735--740, 2018.

\end{thebibliography}

\end{document}